\pgfplotsset{compat=1.6}
\theoremstyle{plain}
\newtheorem{thm}{Theorem}
\theoremstyle{definition}
\newtheorem{asmptn}{Assumption}
\newtheorem{rem}{Remark}
\renewcommand{\epsilon}{\varepsilon}
\newcommand{\ubar}[1]{\underaccent{\bar}{#1}}
\title{
Latency-Reliability Tradeoffs for State Estimation}
\author{Konstantinos Gatsis, Hamed Hassani, George J. Pappas
\thanks{
 This research is partially supported by NSF CPS-1837253, NSF-CCF 1755707, and by the Intel Science and Technology Center for Wireless Autonomous Systems (ISTC-WAS)
	and by ARL DCIST CRA W911NF-17-2-0181. The authors are with the Department of Electrical and Systems Engineering, University of Pennsylvania, 200 South 33rd Street, Philadelphia, PA 19104. Email: \{kgatsis, hassani, pappasg\}@seas.upenn.edu.}%
}
\begin{document}

\maketitle

\begin{abstract}
The emerging interest in low-latency high-reliability applications, such as connected vehicles, necessitates a new abstraction between communication and control.
Thanks to advances in cyber-physical systems over the past decades, we understand this interface for classical bit-rate models of channels as well as packet-loss-type channels.  
This work proposes a new abstraction characterized as a tradeoff curve between latency, reliability and rate. Our aim is to understand: Do we (control engineers) prefer faster but less reliable communications (with shorter codes), or slower but more reliable communications (with longer codes)?  
In this paper we examine the tradeoffs between latency and reliability for the problem of estimating dynamical systems over communication channels. Employing different latency-reliability curves derived from practical coding schemes, we develop a co-design methodology, i.e., select the code length depending on the system dynamics to optimize system performance. 
\end{abstract}

\thispagestyle{plain}
\pagestyle{plain}

\section{Introduction}\label{sec:intro}

Recent interest in the Internet-of-Things (IoT) and the next generation wireless communication standards (5G) is targeting applications such as connected vehicles, collaborative swarm planning, smart cities, and industrial control~\cite{agiwal2016next}. These are challenging applications due to their low-latency high-reliability closed-loop control requirements. This necessitates \emph{rethinking} the communication stack, practical codes, networking architecture, and control design that can provide ultra low latency ($<$1ms) and very high reliability ($>$99.999\%), which is not possible in today's wireless systems. Even more fundamentally, we lack an understanding of the limits for stability, estimation, and control over low-latency, high-reliability communications.

In this paper, we take the first steps in understanding the fundamental tradeoffs between latency, reliability, stability, and control performance. More simply, we ask:\\
{\em Does a dynamical system need faster but less reliable information or slower but more reliable information?} 

To answer this question we propose a new communication abstraction (Figure~\ref{fig:abstraction}) based on the latency-rate-reliability curves obtained from recent developments in information/coding theory for finite blocklengths. Furthermore, we argue fundamentally about the opportunity of \textit{co-designing} the dynamical (control) system and communication block and the role the key parameters (i.e. rate, reliability, error, system dynamics) play in such a co-design. 

One of the earliest abstractions between communication and control is based on bit rate characterizations. This has permitted a fundamental understanding of the minimum bit rate required for stability over data-rate limited channels~\cite{Tatikonda, Nair_overview, franceschetti2014elements, Liberzon_Mitra_2018} as well as for the case where besides data rate constraints channels also introduce noise~\cite{sahai2006necessity, tatikonda2004control}. Beyond fundamental characterizations this has led to an extensive literature on appropriate quantizer, encoder, and decoder designs for control~\cite{brockett2000quantized, Tatikonda, yuksel2006minimum, ostrovsky2009error, sukhavasi2016linear,kostina2016rate,khina2017algorithms,varaiya1983causal, borkar1997lqg,khina2016multi,kostina2018exact}. 
A different widely adopted abstraction is packet-based communication where quantization and data rate effects are often ignored. This has permitted the analysis of the maximum packet drop rate above which controlling a plant becomes impossible~\cite{Sinopoli_intermittent, Basar_optimal_control_over_links, Hespanha_survey, Gupta_LQG}, as well as observer and controller design to counteract random packet drops~\cite{ Walsh_stability,  smith2003estimation, Basar_optimal_control_over_links, Hespanha_survey, Schenato_foundations, Gupta_LQG}. 

\begin{figure}
	\centering
	\includegraphics[width = 0.8\columnwidth]{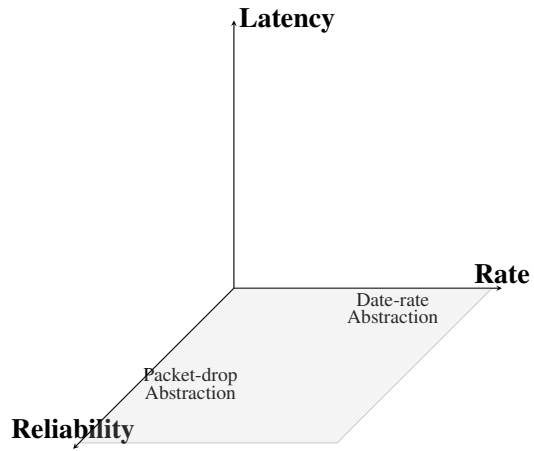}
	\caption{Rate-Latency-Reliability Design Space for Networked Estimation and Control. Existing works explore the design space spanned by rate and reliability axes. This paper introduces a methodology for exploring latency as well as its tradeoffs with rate and reliability from the perspective of state estimation.}
	\label{fig:abstraction}
\end{figure}

Finally the recent interest in low-power IoT devices has increased the development of abstractions in terms of resources, e.g., the number of transmissions, and has resulted in the development of frameworks such as event-triggered control \cite{Event_triggered_intro, Hespanha_main, lipsa2011remote, imer2010optimal,Hirche_joint,nayyar2013optimal} and transmit power allocation~\cite{GatsisEtal14,Quevedo_power_coding}. 
Resource allocation in more complex networks with multiple plants, sensors, or actuators has also attracted attention but primarily without latency considerations. Examples include sharing a communication medium between different sensor and actuators~\cite{ Branicky_stability, heemels2010networked,Donkers_switched,LeNy_resource_LQR, Hirche_Scheduling_Price}, decentralized mechanisms subject to interferences~\cite{Johansson_dual_TAC,Martins_estimation_shared}, and control over shared wireless channels~\cite{GatsisEtal15, GatsisEtal18}. {Recent approaches also consider control system operation over multiple links where each link has a different given delay parameter~\cite{smarra2018efficient,maity2018optimal}.}

{On the other hand, low-latency communication introduces a novel design parameter that is absent in the above literature. In particular, delay has been treated as a given disturbance to a control system either fixed or random. Longer delays are undesirable because, as expected, they degrade system performance, but they have not been explicitly part of the design.
{In that sense the existing literature explores the design space spanned by the reliability and data rate axes of Figure~\ref{fig:abstraction} from the perspective of control systems.} 
Our novelty is on analyzing the effect of latency in performance and appropriately guiding the selection of code blocklength.}

We note that interest in low-latency communications has also started to appear within the networking community, using the concept of age or freshness of information~\cite{sun2017update,he2018optimal,kadota2018optimizing},
or analyzing latency and reliability using network coding and cooperative communication~\cite{swamy2017real, swamy2016network, dickstein2016finite, hu2018finite}. 

\begin{figure}
	\centering
	\includegraphics[width = \columnwidth]{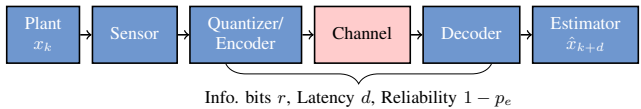}
	\caption{{System for Low-latency High-Reliability Remote Estimation. The communication block consisting of encoder, channel, and decoder is abstracted as a latency-reliability curve.}}
	\label{fig:channel_model}
\end{figure}

In information theory, fundamental limits for low-latency communication have been characterized in \cite{polyanskiy2010channel,dobrushin1961mathematical, strassen}  (also known as ``non-asymptotic channel coding", see Section~\ref{sec:coding} and \ref{opt_scaling} for details). Such laws demonstrate how the minimum communication (coding) latency scales with rate and reliability. Such laws are typically used as a benchmark for comparing the latency of a coding scheme with the optimal latency. Even though it has been shown that codes with optimal latency exist,  so far none of the practical coding schemes (such as polar or iterative codes) have been capable of achieving such optimal latencies \cite{hassani2014finite, mondelli2016unified,mondelli2014achieve}. Hence, designing practical codes with optimal latency is a key research frontier in information theory.
In this paper instead we use such fundamental scaling laws from information theory to abstract the communication block (encoding/channel/decoding) of the system shown in Figure~\ref{fig:channel_model} that includes the physical plant dynamics. In other words, we can find out what the optimal (shortest) range of the transmission length (coding latency) should be from the perspective of dynamical system performance. Such an abstraction of the communication block can then help us to provide fundamental tradeoffs between reliability and latency for the whole system shown in Figure~\ref{fig:channel_model}. 
Even though we do not derive any new blocklength-reliability curves, to the best of our knowledge we are the first to use them to optimize estimation performance for dynamical systems.

In Section~\ref{sec:setup} we describe our setup which involves the remote estimation of a scalar dynamical system over a communication channel. We introduce our abstraction capturing the available information bits, latency, and reliability of the communication block. Using a sequential quantization scheme from the control literature~\cite{Tatikonda} we analyze estimation performance. In particular we show that reliability plays a crucial role in keeping the estimation stable (cf.~Theorem~\ref{thm:stability}) while both latency and reliability need to be taken into account to optimize the steady state estimation performance (cf.~Theorem~\ref{thm:error_bound}).

We proceed in Section~\ref{sec:coding} to characterize the role of coding in the proposed abstraction. We utilize known blocklength-reliability curves of both theoretically optimal and practical coding schemes and we consider their effect on the system performance, i.e., the estimation error, exploiting the control-theoretic results of Section~\ref{sec:setup}. {Longer codes, even though they improve reliability, should be avoided due to their latency impact on performance. On the other hand, too short codes do not have the necessary reliability to even stabilize the system. Our contribution is a methodology that facilitates the choice of optimal code length and reveals its relationship with system dynamics and channel conditions. 
 To the best of our knowledge this is the first time such a fundamental relationship is derived. }This analysis is extended to higher-dimensional systems in Section~\ref{sec:multidimensional}.

{In summary, the contributions of our paper are:
\begin{itemize}
	\item a new communication abstraction that includes rate, latency, and reliability as design parameters based on finite blocklength codes
	\item an analysis of state estimation performance over this abstraction (Theorem~\ref{thm:error_bound})
	\item a methodology for selection of the optimal coding blocklength for the first time for the problem of state estimation. 
\end{itemize}}

\section{Problem Setup}\label{sec:setup}

Consider the setup indicated in Figure~\ref{fig:channel_model}. A sensor is measuring the state of a dynamical process and a remote estimator is interested in maintaining a state estimate of the dynamical process. This is achieved by \emph{communication} between the two entities over a \emph{noisy channel}. Specifically, we consider the discrete time scalar dynamical system - the general case of higher-dimensional systems is treated in Section~\ref{sec:multidimensional}
\begin{equation}\label{eq:system}
	x_{k+1} = A x_k + w_k
\end{equation}
where $k=0,1,\ldots$ are the discrete time steps, $x_k \in \reals$ is the state of the dynamical system where the initial state  $x_0$ is known to lie in some set $[\ubar{X}_0, \bar{X}_0] \subset \mathbb{R}$
, and $w_k$ is an unknown disturbance of magnitude $w_k \in [-W/2, W/2]$. This assumption of \emph{bounded} initial condition and disturbance is common in the literature, see, e.g., \cite{Tatikonda,sahai2006necessity,sukhavasi2016linear} -- see also available approaches for the case of unbounded stochastic disturbances~\cite{Nair_overview, kostina2018exact}. {Equation \ref{eq:system} is derived from discretizing a continuous time dynamical system over small units of time. Without loss of generality, as we will also make clear in Section~\ref{sec:coding} we may take each discrete time step normalized to correspond to the time interval required for the transmission of a single bit over the channel. Then we may define all other time intervals as multiples of these discrete units of time.
}

We let $A\geq 0$ and note that similar analysis can be given for the symmetric case $A\leq 0$. In general this model captures the important case $A>1$, i.e., where the system is unstable and the remote estimator is interested in tracking the state with a bounded error even though the state can grow unbounded. 

The sensor samples the dynamical system \ref{eq:system} every $T$ discrete time steps, i.e., at times $k=0, T, 2T, \ldots$. Thus, from the perspective of the sensor this is equivalent to sampling a dynamical system of the form
\begin{equation}
x_{(\ell+1) T} = A^T x_{\ell T} + \sum_{j=0}^{T-1} A^{T-1-j} w_{\ell T +j}
\end{equation}
%
with the index $\ell =0, 1, \ldots$ counting the number of generated samples. 
Throughout the paper this sampling period $T$ is fixed and not a design parameter. 
{For notational convenience we represent the system dynamics in the form \ref{eq:system} with respect to the (shorter) discrete time steps corresponding to  transmission intervals of single bits, while the (longer) sampling period is a multiple of these discrete time steps -- see also Figure~\ref{fig:timing}.}

At the sensor each sample is converted (quantized) to an $r$-bit message. The message to be communicated is transformed into a generally longer $n$-bit message by using some form of channel coding procedure-- see details in Section~\ref{sec:coding}. The transmission of the coded message to the remote estimator introduces a \emph{delay}\footnote{{We note that a different notion of delay is discussed in \cite{sahai2006necessity,sukhavasi2016linear}. In particular this has to do with the case where communication is over a noisy channel and it takes a random number of channel uses for the decoder to correctly decode past transmitted messages. This is different than transmission latency in our work}}. In particular in this paper we account for the time to transmit each bit over the channel. We note that in practice we can extend our model to include other message overheads that introduce delays, as well as the process of encoding and decoding which introduces computational delay, or there might be uncertainty in the delay. Additionally communication introduces noise. We make the following assumptions for the delay, noise, and availability of channel feedback. As we will see, these assumptions are fairly general depending on how the parameters are set and cover a variety of practical scenarios. 

\begin{asmptn}\label{as:model}
	Each $r$-bit message requires a delay of $d$ time steps, with $d\leq T$. Each message is either successfully decoded with probability $1-p_e$, or with probability $p_e$ it is corrupted during the communication and discarded. 
\end{asmptn}

\begin{asmptn}\label{as:ack}
	The receiver/remote estimator sends perfect acknowledgment signals to the transmitter about whether each message is successfully received or not.
\end{asmptn}

\begin{figure}
	\centering
	\includegraphics[width = \columnwidth]{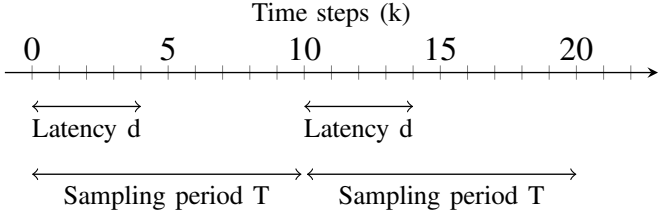}
	\caption{{Time steps indexed by $k=0, 1, \ldots$ correspond to time intervals required for the transmission of single bits over the channel. The sensor samples and transmits the state of the plant periodically and this period corresponds to $T$ time steps, e.g., $T=10$ above. The transmission of each sample which consists of multiple bits requires $d$ time steps, e.g., $d=4$ above.}}
	\label{fig:timing}
\end{figure}

Assumption~\ref{as:model} describes our proposed \textit{abstraction for low-latency high-reliability dynamical systems}. As shown in Figure~\ref{fig:channel_model} this abstraction by the length of the message $r$, the latency $d$, and the reliability $1- p_e$ corresponds to the whole \textit{communication block} including the encoding, channel, and decoding. In this section we analyze the estimation performance treating these as given parameters. In practice, as we discuss in Section~\ref{sec:coding}, these are interrelated parameters as derived from practical coding schemes and cross-layer optimization. For example keeping the channel reliability $1 - p_e$ fixed, we can increase the quantization level $r$ to get more precise information, but this will increase the delay $d$ of the communication which may adversely affect the estimation performance. Similar tradeoffs arise by tuning any of the parameters. {We also note that the assumption that latency is less than the sampling period ($d\leq T$) is added to practically ensure that messages are received before new messages are being generated to be sent over the communication block.}

The fact that the receiver knows whether the message is successfully decoded or not (Assumption~\ref{as:model}) is an important assumption and is often made in the control literature \cite{Tatikonda,tatikonda2004control}-- see also Remark~\ref{rem:detection}. Assumption~\ref{as:ack} is also typical.

To sum up, the state is sampled at time steps $0, T, 2T, \ldots$ and transmission takes $d$ time steps -- see also Figure~\ref{fig:timing}. {Similar models are common in the literature~\cite{Donkers_switched}.} At time steps $d, T+d, 2T+d, \ldots$ the remote estimator builds some estimates $\hat{x}_d, \hat{x}_{T+d}, \hat{x}_{2T+d}, \ldots$ about the corresponding state and we measure performance by the magnitudes of the estimation error $|x_d - \hat{x}_d|, |x_{T+d} - \hat{x}_{T+d}|, |x_{2T+d} - \hat{x}_{2T+d}|, \ldots$. Other performance metrics are also possible.

\subsection{Quantization scheme}

The way each state sample is quantized to an $r$-bit message is an important choice. Here we employ the scheme of Tatikonda-Mitter~\cite{Tatikonda} which is a sequential quantization scheme. 
We also point out that other quantization schemes are also available in the literature, e.g.,~ \cite{Nair_overview,sukhavasi2016linear,kostina2018exact}, but not explored in this work.

Our coding scheme is as follows. At time $k=0$ the receiver/estimator knows that the state belongs in the initial set $[\ubar{X}_0, \bar{X}_0]$. The transmitter uniformly quantizes this range of values into $2^r$ bins and transmits the $r$-bit message indicating the bin in which the measured state $x_0$ belongs. After the transmission duration of $d$ time steps we have two cases.

\underline{Case I:} If the message corresponding to the sample $x_0$ is not successfully received at time $k=d$, the remote estimator only knows the initial information, i.e., that $x_0 \in [\ubar{X}_0, \bar{X}_0]$. Then it can form an estimate about $x_0$ as the center value of this interval
\begin{equation}\label{eq:coding_start}
	\hat{x}_0 = \frac{\bar{X}_0 + \ubar{X}_0 }{2} .
\end{equation}
However the estimator is interested in the \textit{current} value of the state $x_d$ at this reception time $k=d$. To obtain an estimate of the current state $x_d$ and counteract the effect of the delay, the estimator can propagate the obtained estimate $\hat{x}_0$ by the system dynamics \ref{eq:system} assuming zero noise to form 
\begin{equation}
		\hat{x}_d = A^d \hat{x}_0.
\end{equation}
Moreover by Assumption~\ref{as:ack} the receiver sends an acknowledgment signal to the transmitter to notify that the message was not successfully received. At the next sampling time $k=T$ the receiver only knows that the state $x_T$ due to the model \ref{eq:system} has changed to some value
\begin{equation}\label{eq:x_T}
	x_T = A^T x_0 + \sum_{m=0}^{T-1} A^{T-1-m} w_m
\end{equation}
which lies in a set given by
\begin{equation}\label{eq:set_failure}
	[A^T\ubar{X}_0 - \frac{A^T-1}{A-1} \frac{W}{2}, A^T\bar{X}_0 + \frac{A^T-1}{A-1} \frac{W}{2}],
\end{equation}
summing up the magnitude of the unknown values of the system noise $w_0, \ldots, w_{T-1}$. Then the process repeats, i.e., the sensor quantizes uniformly this new set, it sends the bin in which the state $x_T$ belongs, etc.

\underline{Case II:} Alternatively, if the message corresponding to the sample $x_0$ is successfully decoded at time $d$, the remote estimator learns in which of the $2^r$ bins the state $x_0$ belongs, e.g., $x_0 \in [ \ubar{X}_0' , \bar{X}_0' ]$ with length $\bar{X}_0' - \ubar{X}_0' =(\bar{X}_0 - \ubar{X}_0)/{2^r}$. It can construct an estimate as the center of that interval $\hat{x}_0 = (\ubar{X}_0' + \bar{X}_0')/2$.
As in the previous case the estimator can propagate the initial estimate $\hat{x}_0$ via the system dynamics to obtain a current estimate of the form $\hat{x}_d = A^d \hat{x}_0$. 
Moreover the receiver sends an acknowledgment signal to the transmitter to notify that the message was successfully received. At the next sampling time $T$ the receiver knows that the state $x_T$ given by \ref{eq:x_T} lies in a new set 
\begin{equation}\label{eq:coding_end}
	[A^T\ubar{X}_0' - \frac{A^T-1}{A-1}\frac{W}{2}, A^T\bar{X}_0' + \frac{A^T-1}{A-1} \frac{W}{2}],
\end{equation} 
because it knows that $x_0 \in [ \ubar{X}_0' , \bar{X}_0' ]$ -- note also that the set \ref{eq:coding_end} in this case is smaller than the set in the opposite case in \ref{eq:set_failure}. Then the process repeats. An illustration of this sequential coding scheme is shown in Figure~\ref{fig:example}.

\begin{figure}[t!]
	\centering
	\includegraphics[width=0.5\textwidth]{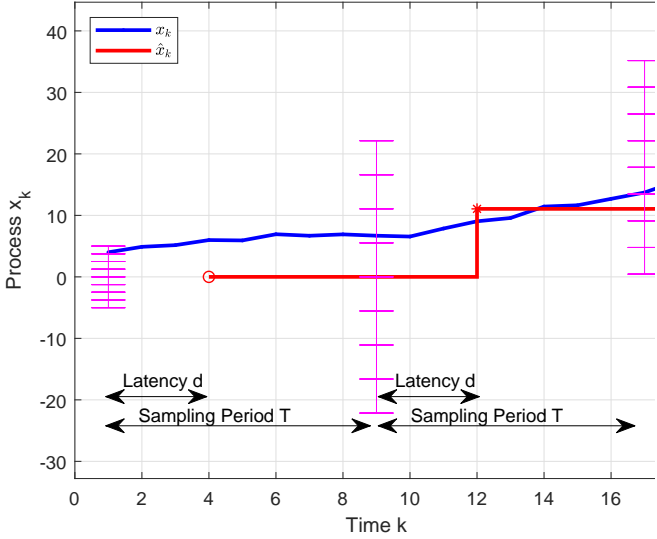}
	\caption{The process $x_k$ is sampled periodically at time steps $k=1,9,17,...$ and quantized to the 8 ($r=3$) bins shown. After a latency of $d=3$ time steps the receiver either decodes or discards the message. The first message at time $k=4$ is discarded (shown with an circle), resulting in a larger uncertainty about the state. The second message at time $k=12$ is correctly decoded (shown with a star) and the estimator obtains a good estimate.}
	\label{fig:example}
\end{figure}

Given the communication uncertainty described in the two cases above, we can derive the performance of the estimator at time $d$ as 
\begin{equation}\label{eq:estimation_error_one_step}
	\mathbb{E} | x_d - \hat{x_d} | \leq \frac{1}{2}\left[
	A^d (p_e + \frac{1-p_e}{2^r} ) (\bar{X}_0 - \ubar{X}_0) + \frac{A^d-1}{A-1} W \right]
\end{equation}
Here the expectation accounts for the uncertainty in the success of decoding. We observe in particular that the estimation error increases linearly as the decoding error probability increases. On the contrary the estimation error increases exponentially as the latency (delay) increases, hence latency has a detrimental effect on the performance motivating our analysis. We also point out that this bound depends on both the initial uncertainty about the state as well as the level of disturbance.

To get some further intuition about this estimation performance we plot in Figure~\ref{fig:one_shot_latency_reliability} the right hand side of \ref{eq:estimation_error_one_step} as a function of the latency and reliability parameters, assuming they can be arbitrarily selected, while the quantization rate $r$ is fixed. It is not surprising that the estimation performance is best under ideal conditions where the message is transmitted instantaneously without delay and with perfect reliability. However, as we discuss in Section~\ref{sec:coding} this is not practically possible as these two parameters are dependent. More importantly, from Figure~\ref{fig:one_shot_latency_reliability} we observe that there is a tradeoff between latency and reliability and this tradeoff depends on the system dynamics. We can achieve the same level of estimation performance either by increasing reliability or by decreasing latency. In particular it can be seen that if we fix the quantization level $r$, we can keep the estimation error lower than a desired value as long as the channel error behaves as $p_e = O( A^{-d})$.

\begin{figure}[t!]
	\centering
	\includegraphics[width=0.5\textwidth]{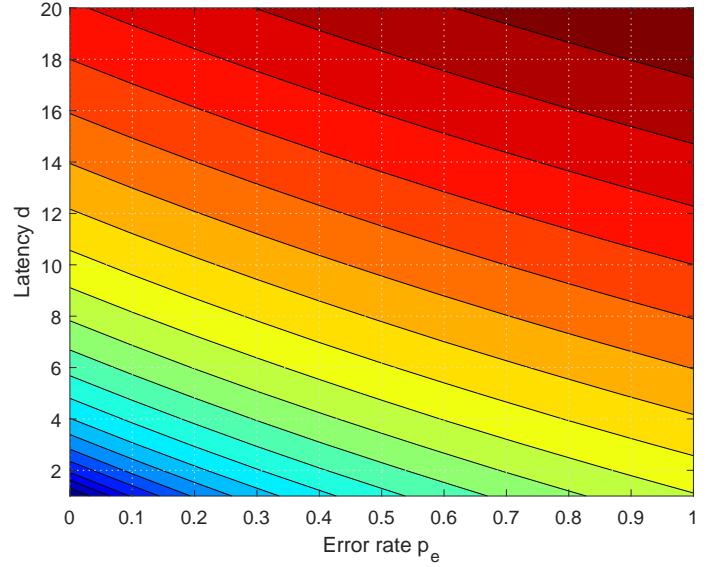}
	\caption{Sublevel sets of the single-shot estimation error \ref{eq:estimation_error_one_step} as a function of reliability and latency. Estimation is best at the minimum latency and minimum channel error. Otherwise there is a latency-reliability tradeoff that achieves the same level of estimation performance.}
	\label{fig:one_shot_latency_reliability}
\end{figure}

\subsection{Steady state estimation performance}

Since the state changes over time by \ref{eq:system}, we are interested in whether the remote estimator can track this process, and furthermore how well it can do so in the long run. Our first result derives conditions on the abstraction parameters under which estimation is feasible.

\begin{thm}[Scalar Stability Condition]\label{thm:stability}
Consider the remote estimation of the scalar dynamical system in \ref{eq:system} over the communication channel and let Assumptions~\ref{as:model} and~\ref{as:ack} hold. Suppose we employ the sequential quantization scheme described in \ref{eq:coding_start}-\ref{eq:coding_end}. Then the expected value of the estimation error $|x_{\ell T+d} - \hat{x}_{\ell T + d}|$, $\ell = 0, 1, \ldots$ is bounded if and only if
\begin{align}\label{eq:stability_condition}
	&(p_e  + \frac{1-p_e}{2^r}) A^T < 1. 
\end{align}
\end{thm}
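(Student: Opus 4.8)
The plan is to reduce bounded estimation error to the stability of a scalar linear recursion for the expected size of the estimator's uncertainty interval. Let $U_\ell$ denote the (random) width of the interval in which the receiver knows the sampled state $x_{\ell T}$ to lie at sampling time $\ell T$, so that $U_0 = \bar{X}_0 - \ubar{X}_0$. The sequential scheme of \ref{eq:coding_start}--\ref{eq:coding_end} does two things per period: the quantization/decoding step leaves the uncertainty about $x_{\ell T}$ equal to $U_\ell$ when the message is discarded (probability $p_e$) and equal to $U_\ell/2^r$ when it is decoded (probability $1-p_e$); then propagation to the next sample via \ref{eq:x_T} multiplies the surviving width by $A^T$ and adds the accumulated disturbance term $\tfrac{A^T-1}{A-1}W$, exactly as in \ref{eq:set_failure} and \ref{eq:coding_end}. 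This yields the stochastic recursion in which $U_{\ell+1}$ equals $A^T U_\ell + \tfrac{A^T-1}{A-1}W$ or $A^T U_\ell/2^r + \tfrac{A^T-1}{A-1}W$ according to the two cases.

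The first key step is to take expectations. Since the decoding outcome at period $\ell$ is independent of $U_\ell$ (which is a function of the outcomes of the earlier periods $0,\dots,\ell-1$), conditioning on $U_\ell$ and averaging over the current channel event collapses the two cases into the deterministic affine recursion
\begin{equation*}
\mathbb{E}[U_{\ell+1}] = \alpha\,\mathbb{E}[U_\ell] + \beta, \qquad \alpha := \Big(p_e + \tfrac{1-p_e}{2^r}\Big)A^T, \quad \beta := \tfrac{A^T-1}{A-1}W.
\end{equation*}
This recursion has a bounded (indeed convergent) solution precisely when $\alpha < 1$ and an unbounded one when $\alpha \ge 1$: geometric blow-up if $\alpha > 1$, and linear growth driven by $\beta$ if $\alpha = 1$. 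This dichotomy is exactly the condition \ref{eq:stability_condition}.

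It remains to tie $\mathbb{E}[U_\ell]$ to the quantity of interest, the expected error at the reception times $\ell T + d$. Repeating the single-period computation that produced \ref{eq:estimation_error_one_step}, but starting from an interval of width $U_\ell$ rather than $\bar{X}_0 - \ubar{X}_0$, gives the upper bound $\mathbb{E}|x_{\ell T+d} - \hat{x}_{\ell T+d}| \le \tfrac{1}{2}\big[A^d(p_e + \tfrac{1-p_e}{2^r})\mathbb{E}[U_\ell] + \tfrac{A^d-1}{A-1}W\big]$, which settles the \emph{if} direction: $\alpha < 1$ bounds $\mathbb{E}[U_\ell]$ and hence the error. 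For the \emph{only if} direction I would establish a matching lower bound, arguing that whenever the message is discarded the estimator possesses no information finer than the width-$U_\ell$ interval, so for a worst-case placement of the state and disturbance within it the conditional error is at least a fixed fraction of $A^d U_\ell$; averaging over the channel then lower-bounds the expected error by an affine function of $\mathbb{E}[U_\ell]$, which is unbounded as soon as $\alpha \ge 1$.

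I expect the necessity direction to be the main obstacle. The upper bound follows mechanically from \ref{eq:estimation_error_one_step}, but the lower bound requires a genuine information argument showing the scheme cannot beat its own uncertainty width, together with careful handling of the boundary case $\alpha = 1$, where instability stems not from geometric blow-up but from the additive disturbance term $\beta$ accumulating linearly in $\ell$.
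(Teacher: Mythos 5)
Your argument is essentially the paper's: you track the width of the estimator's uncertainty interval, derive the two-case stochastic recursion from the success or failure of each transmission, take expectations to collapse it into the affine recursion with coefficient $\alpha = (p_e + \frac{1-p_e}{2^r})A^T$, and read off the threshold $\alpha < 1$, which is exactly \ref{eq:stability_condition}. The paper's $\Delta_{\ell T}$ is your $U_\ell$ up to whether the $2^{-r}$ shrinkage is applied before or after the $A^T$ propagation; this changes only the additive constant in the expected recursion \ref{eq:recursion}, not the stability condition. The one place you genuinely diverge is the ``only if'' direction: the paper establishes only the upper bound $|x_{\ell T} - \hat{x}_{\ell T}| \leq \frac{1}{2}\Delta_{\ell T}$ and then asserts that divergence of $\mathbb{E}\Delta_{\ell T}$ implies divergence of the expected error, whereas you correctly note that necessity requires a matching lower bound showing the scheme cannot do better than its own uncertainty width, and you sketch one (worst-case placement of the state within the surviving interval, plus separate handling of the linear-growth case $\alpha = 1$). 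Your treatment is the more careful of the two on that half; to close it fully you would still need to argue that a single adversarial disturbance sequence realizes a constant fraction of the width simultaneously across channel realizations, but the idea is sound and is precisely what the paper's proof leaves implicit.
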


\begin{proof}
	As mentioned above for the first message there are two cases depending on the success of decoding the message. In the first case where the transmission is unsuccessful the remote estimator learns that the state $x_0$ belongs in the set $[\ubar{X}_0, \bar{X}_0]$ with width $[\bar{X}_0 - \ubar{X}_0]$ and constructs the estimate $\hat{x}_0 = (\bar{X}_0 + \ubar{X}_0 )/2$. In this case at time $k=d$ the transmitter has an estimation error about $x_0$ bounded by
	\begin{equation}\label{eq:error_bound_fail}
	|x_0 - \hat{x}_0| \leq  \frac{1}{2} (\bar{X}_0 - \ubar{X}_0 )
	\end{equation}

	In the opposite case where the remote estimator receives the message correctly, it learns that the state $x_0$ belongs in a set  $x_0 \in [ \ubar{X}_0' , \bar{X}_0' ]$ with length $\bar{X}_0' - \ubar{X}_0' =(\bar{X}_0 - \ubar{X}_0){2^r}$.  It constructs an estimate as the center of that interval $\hat{x}_0 = (\ubar{X}_0' + \bar{X}_0')/2$ with error bounded by
	\begin{equation}\label{eq:error_bound_success}
	|x_0 - \hat{x}_0| \leq  \frac{1}{2} (\bar{X}_0' - \ubar{X}_0' ) = \frac{1}{2}\frac{\bar{X}_0 - \ubar{X}_0}{2^r} 
	\end{equation}

	Let us define a random variable $\Delta_0$ as the length of the set concerning $x_0$ at the estimator at time $k=d$. That is
	\begin{equation}		
			\Delta_0:= \left\{\begin{array}{ll}
			\bar{X}_0 - \ubar{X}_0 & \text{w.p. } p_e \\
			\frac{1}{2^r}(\bar{X}_0 - \ubar{X}_0) & \text{w.p. } 1 -p_e 
			\end{array}
			\right.
	\end{equation}
	
	From the inequalities \ref{eq:error_bound_fail} and \ref{eq:error_bound_success} we also have that the estimation error is in both cases bounded by
	\begin{equation}
	|x_0 - \hat{x}_0| \leq  \frac{1}{2}  \Delta_0
	\end{equation}

	Moreover using this notation, given the success or failure of the first message, at the next sampling time $T$ the state is known to lie in a set of magnitude $A^T\Delta_0 + \frac{A^T-1}{A-1} W$. This is quantized in $2^r$ bins and sent again.
	
	Similarly let us define $\Delta_T$ as the width of the set where the remote estimator knows that the state $x_T$ belongs depending on the success or failure of the message at time $k =T+d$. This is a new random variable defined as
	\begin{equation}		
	\Delta_T:= \left\{ \begin{array}{ll}
	A^T \Delta_{0} + \frac{A^T-1}{A-1} W &\text{w. prob. } p_e \\
	\frac{1}{2^r}(A^T\Delta_{0} + \frac{A^T-1}{A-1} W ) & \text{w. prob. } 1-p_e
	\end{array}\right.
	\end{equation}
	Again we have that the new estimate satisfies $|x_T - \hat{x}_T| \leq  1/2 \Delta_T$.
	
	The process repeats so that we have the recursion at the $(\ell+1)$-th transmission
	\begin{equation}
	\Delta_{(\ell+1) T} := \left\{ \begin{array}{ll}
	A^T \Delta_{\ell T} + \frac{A^T-1}{A-1} W &\text{w. prob. } p_e \\
	\frac{1}{2^r}(A^T\Delta_{\ell T} + \frac{A^T-1}{A-1} W ) & \text{w. prob. } 1-p_e
	\end{array}\right.
	\end{equation}
	and moreover we have that
	\begin{equation}\label{eq:error_bound_delta}
	| x_{(\ell+1) T}  - \hat{x}_{(\ell+1) T}  | \leq \frac{1}{2}  \Delta_{(\ell+1) T} 
	\end{equation}
	Taking expectation at both sides we can bound the expected estimation error by
	\begin{equation}
		\mathbb{E} | x_{(\ell+1) T}  - \hat{x}_{(\ell+1) T}  | \leq \frac{1}{2}  \mathbb{E} \Delta_{(\ell+1) T}.
	\end{equation}
	Also taking the expectation in the above recursion we have that
	\begin{align}\label{eq:recursion}
	\mathbb{E}\Delta_{(\ell+1) T} 
	=
	&(p_e 	+ (1-p_e) \frac{1}{2^r}) A^T \mathbb{E}\Delta_{\ell T} 
	\notag \\
	&+  (p_e 	+ (1-p_e) \frac{1}{2^r}) \frac{A^T-1}{A-1} W
	\end{align}
	This is a linear system of equations that converges to a finite value if and only if \ref{eq:stability_condition} holds. Hence the expected estimation error is bounded if and only if \ref{eq:stability_condition}  holds.
\end{proof}

From this theorem we observe that to maintain bounded estimation there is a critical dependence between reliability, quantization level, and system dynamics. Assuming dynamics $A$ and quantization level $r$ fixed, there is a critical threshold on the communication error rate $p_e$ above which communication becomes too noisy and it is impossible to maintain a finite estimation error. Assuming dynamics $A$ and communication error rate $p_e$ fixed, there is a critical threshold on the quantization level $r$ below which information becomes too coarse and it is impossible to maintain a finite estimation error.
{Related stability results are also known -- see Remark~\ref{rem:stability}.
}

On the other hand latency $d$ does not play a role for stability as it does not appear in \ref{eq:stability_condition} - apart from the practical condition that latency should be smaller than sampling period ($d \leq T$ in Assumption~\ref{as:model}). {The fact that latency does not affect stability is also mentioned in \cite[Section~IV.F]{sahai2006necessity} and \cite{Nair_overview} based on an input-output channel model where information from the input is always delayed by a constant amount of time at the output.} We will show now however that it does play a significant role for the steady state estimation performance.

\begin{figure*}[t!]
	\centering
	\includegraphics[width=0.9\textwidth]{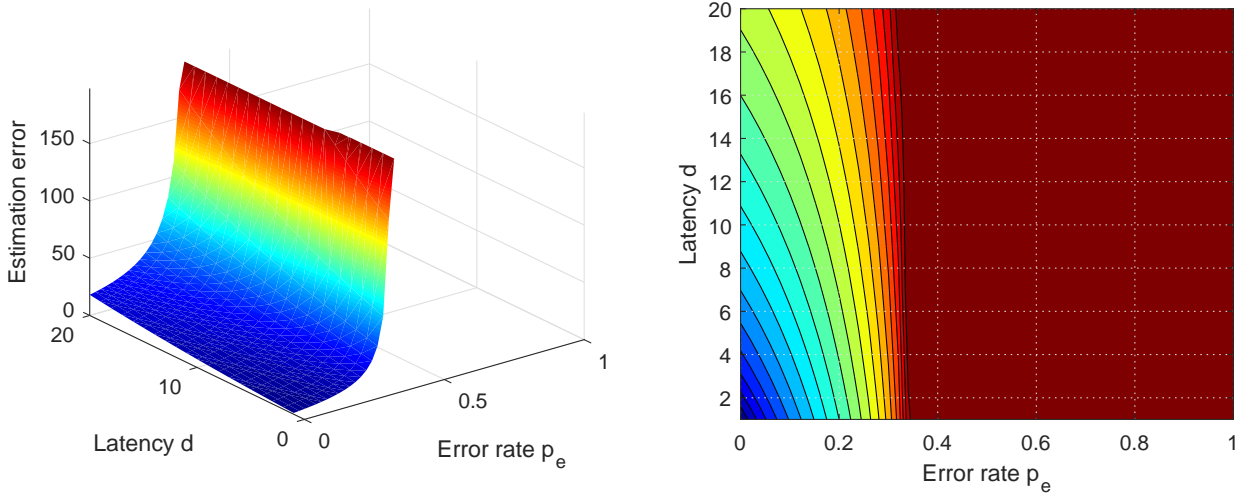}
	\caption{Sublevel sets of the steady state estimation performance \ref{eq:estimation_error_steady_state} as a function of reliability and latency. Estimation error becomes unbounded (unstable) when the reliability exceed some threshold, while larger latency adversely affects performance.}
	\label{fig:steady_state_latency_reliability}
\end{figure*}

\begin{thm}[Scalar Estimation Performance]\label{thm:error_bound}
Consider the remote estimation of the scalar dynamical system in \ref{eq:system} over the communication channel and let Assumptions~\ref{as:model} and~\ref{as:ack} hold. Suppose we employ the sequential quantization scheme described in \ref{eq:coding_start}-\ref{eq:coding_end}. If \ref{eq:stability_condition} holds, then the bound on the expected steady state error converges to the value 
\footnote{For the marginally stable case $A=1$ the bound becomes $\frac{d + (p_e  + (1-p_e)2^{-r})(T-d)}{(1-p_e)(1-2^{-r})}\frac{W}{2}$.}
\begin{align}
\limsup_{\ell \rightarrow \infty} &\,\,\mathbb{E} |x_{\ell T +d} - \hat{x}_{\ell T +d}|  \nonumber \\
&
\leq \frac{1}{2} \frac{ A^d +  (p_e  + \frac{1-p_e}{2^r})(A^T - A^d) -1}{1 - (p_e  + \frac{1-p_e}{2^r}) A^T}\frac{W}{A-1}   \label{eq:estimation_error_steady_state}
\end{align}
\end{thm}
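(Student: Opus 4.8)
The plan is to build directly on the expected-width recursion \ref{eq:recursion} already established in the proof of Theorem~\ref{thm:stability}, rather than re-deriving the quantization bookkeeping. Abbreviating the per-message shrinkage factor as $\alpha := p_e + (1-p_e)2^{-r}$, that recursion reads $\mathbb{E}\Delta_{(\ell+1)T} = \alpha A^T \mathbb{E}\Delta_{\ell T} + \alpha \frac{A^T-1}{A-1}W$. Under the stability hypothesis \ref{eq:stability_condition}, i.e. $\alpha A^T < 1$, this scalar affine recursion is a contraction, so $\mathbb{E}\Delta_{\ell T}$ converges to the unique fixed point
\[
\Delta^\star = \frac{\alpha\,\frac{A^T-1}{A-1}\,W}{1-\alpha A^T}.
\]
This is the steady-state uncertainty width about the \emph{sampled} state $x_{\ell T}$ as known at the reception time $\ell T + d$.

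The crucial additional ingredient, and the only place where the latency $d$ enters the bound (in contrast to Theorem~\ref{thm:stability}, where it is absent), is converting the uncertainty about $x_{\ell T}$ into the error about the \emph{current} state $x_{\ell T + d}$ that the estimator actually reports. Since the estimator forms $\hat{x}_{\ell T + d} = A^d \hat{x}_{\ell T}$ by noise-free propagation while the true state evolves as $x_{\ell T + d} = A^d x_{\ell T} + \sum_{j=0}^{d-1} A^{d-1-j} w_{\ell T + j}$, I would subtract these and apply the triangle inequality together with the bound $|x_{\ell T} - \hat{x}_{\ell T}| \leq \tfrac12 \Delta_{\ell T}$ from the stability proof and $|w_k|\leq W/2$, obtaining
\[
|x_{\ell T + d} - \hat{x}_{\ell T + d}| \leq A^d |x_{\ell T} - \hat{x}_{\ell T}| + \sum_{j=0}^{d-1} A^{d-1-j}\frac{W}{2} \leq \frac{A^d}{2}\Delta_{\ell T} + \frac{A^d-1}{A-1}\frac{W}{2},
\]
where the geometric disturbance sum over the $d$ propagation steps collapses to $\frac{A^d-1}{A-1}\frac{W}{2}$.

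Taking expectations, then $\limsup_{\ell\to\infty}$, and substituting $\mathbb{E}\Delta_{\ell T}\to\Delta^\star$ gives
\[
\limsup_{\ell\to\infty}\mathbb{E}|x_{\ell T+d}-\hat{x}_{\ell T+d}| \leq \frac{A^d}{2}\Delta^\star + \frac{A^d-1}{A-1}\frac{W}{2}.
\]
The final step is purely algebraic: placing both terms over the common denominator $1-\alpha A^T$ and simplifying the numerator, where the $\alpha A^{T+d}$ cross-terms cancel and leave $A^d + \alpha(A^T - A^d) - 1$, which reproduces \ref{eq:estimation_error_steady_state} verbatim after resubstituting $\alpha$. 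The marginally stable case $A=1$ quoted in the footnote follows from the same computation by the limit $A\to 1$, replacing each $\frac{A^m-1}{A-1}$ by $m$. I expect the only genuine subtlety to be the bookkeeping in the second paragraph: keeping straight that $\Delta_{\ell T}$ quantifies uncertainty about $x_{\ell T}$ whereas the performance metric concerns $x_{\ell T+d}$, and correctly accounting for the $d$ disturbances $w_{\ell T},\ldots,w_{\ell T+d-1}$ accrued during transmission; once that conversion is set up, everything else is routine.
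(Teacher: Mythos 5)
Your proposal is correct and follows essentially the same route as the paper's own proof: it reuses the expected-width recursion \ref{eq:recursion} from Theorem~\ref{thm:stability}, derives the same propagation bound $|x_{\ell T+d}-\hat{x}_{\ell T+d}|\leq A^d|x_{\ell T}-\hat{x}_{\ell T}|+\frac{A^d-1}{A-1}\frac{W}{2}$, substitutes the fixed point of the recursion, and simplifies algebraically to \ref{eq:estimation_error_steady_state}. The algebra checks out (the $\alpha A^{T+d}$ cross-terms do cancel), and the $A\to 1$ limit for the footnote is also handled correctly.
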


\begin{proof}
	
	In our scheme, at each reception time $\ell T +d$ for $\ell =0,1,\ldots$ the estimator constructs an estimate $\hat{x}_{\ell T}$ about the state value $x_{\ell T}$ and then, to counteract the effect of delay, this estimate is propagated through the system dynamics \ref{eq:system} to obtain an estimate about the current state  $x_{\ell T+d}$ of the form
	\begin{equation}
	\hat{x}_{\ell T+d} = A^d \hat{x}_{\ell T}
	\end{equation}
	The current estimation error then is bounded by
	\begin{equation}\label{eq:current_error_bound}
	| x_{\ell T+d} - \hat{x}_{\ell T+d} | \leq A^d |x_{\ell T} - \hat{x}_{\ell T}| + \frac{A^d-1}{A-1} \frac{W}{2}
	\end{equation}
	accounting for the noise of the system \ref{eq:system} during this delay.

	Similar to the proof of Theorem~\ref{thm:stability}, we define define $\Delta_{\ell T}$ as the width of the set where the remote estimator knows that the state $x_{\ell T}$ belongs depending on the success or failure of the message at time $k =\ell T+d$. Moreover we have shown in \ref{eq:error_bound_delta} that  $ |x_{\ell T} - \hat{x}_{\ell T}| \leq \Delta_{\ell T}/2$ holds always. Using this fact and taking expectation of \ref{eq:current_error_bound} we get that
	\begin{equation}\label{eq:current_error_bound2}
	\mathbb{E} | x_{\ell T+d} - \hat{x}_{\ell T+d} | \leq A^d \frac{\mathbb{E}\Delta_{\ell T}}{2} + \frac{A^d-1}{A-1} \frac{W}{2}
	\end{equation}

	Recall also that $\mathbb{E}\Delta_{\ell T}$ satisfies the recursion in \ref{eq:recursion}. Hence if condition \ref{eq:stability_condition} holds then the value $\mathbb{E}\Delta_{\ell T} $ converges to
	\begin{equation}
	\lim_{\ell \rightarrow \infty} \mathbb{E} \Delta_{\ell T} = 
	\frac{ (p_e  + \frac{1-p_e}{2^r})\frac{A^T-1}{A-1} W}
	{1 - (p_e  + \frac{1-p_e}{2^r}) A^T}
	\end{equation}
	Plugging this limit expression in \ref{eq:current_error_bound2} we obtain the result \ref{eq:estimation_error_steady_state}.
\end{proof}

From \ref{eq:estimation_error_steady_state} it can be seen that, unlike the stability analysis, latency $d$ has a crucial effect on performance (due to the term $A^d$). We plot the bound \ref{eq:estimation_error_steady_state} in Figure~\ref{fig:steady_state_latency_reliability} for different values of the latency parameter $d$ and communication reliability $p_e$ and for a fixed quantization rate $r$. We observe that this steady state case differs from the finite time case of Figure~\ref{fig:one_shot_latency_reliability}. As mentioned above in \ref{eq:stability_condition} there is a minimum reliability below which the remote estimator cannot track the system and the estimation error grows to infinity. Otherwise the estimation performance improves as the error rate becomes smaller or the latency decreases.
{ We also note that a similar tradeoff between data rate and delay and with perfect reliability ($p_e=0$) appears in \cite[Sec.~II]{Nair_overview}, but without the coding design aspects of our paper.}

As already mentioned the parameters $r,d,p_e$ of our abstraction are interdependent variables of the employed channel coding scheme. In the following section we discuss how these dependencies arise from practical coding schemes, and how we can tune over these parameters to obtain the latency-reliability tradeoff that optimizes the steady state system performance.

\begin{rem}\label{rem:stability}
	{Related stability results are also known, e.g., in \cite[Prop.~4.2]{tatikonda2004control} for the case without disturbances, in \cite[Sec.~4.3]{franceschetti2014elements} for the case with disturbances of potentially unbounded support, 
		and in~\cite{Sinopoli_intermittent} for the packet-drop channel without quantization ($r\rightarrow\infty$).}
	{We point out that our stability result concerns the specific quantization scheme employed. One may wonder if it is possible to support systems with even larger eigenvalues (faster systems) by using other quantization or coding schemes. If we ignore latency and consider our communication block abstraction as an input-output channel with input $r$ bits and output that is erased with probability $p_e$, and assuming acknowledgments, then \cite[Sec.~1.4.3]{franceschetti2014elements} uses the notion of anytime capacity introduced in~\cite{sahai2006necessity} and argues that \ref{eq:stability_condition} is in fact necessary and sufficient for stability. We note however that, as detailed in the following section, our communication abstraction already includes a communication channel and corresponding coding and decoding blocks within, so the above discussion on necessary and sufficient conditions for stability may require further investigation. 
	}
\end{rem}

\section{Code Length Selection for Estimation Performance}\label{sec:coding}
In this section we investigate how channel coding can play a role in improving the system performance. That is, we use error correcting codes to alleviate the effect of channel noise on the transmitted messages. While coding leads to a dramatic increase in reliability, it causes an extra penalty in latency as reliability is obtained at the cost of sending longer messages (codewords). We will study latency-reliability tradeoffs obtained from codes and their consequences regarding the system performance. We also discuss cross-layer coding design, i.e., the selection of code parameters that yield the optimal performance taking into account the system dynamics.

\textbf{Uncoded Transmission.} As a warm-up, we first consider uncoded transmission, i.e. each time we send $r$ (information) bits through the channel uncoded and without using any error-correction mechanism.  Perhaps the simplest type of channel we could consider is the so-called \textit{binary erasure channel} with erasure probability $\zeta$ (BEC($\zeta$)). On this channel, a bit is either passed through perfectly with probability $1-\zeta$ or completely erased with probability $\zeta$. Hence, each $r$-bit message is either successfully received with probability $(1-\zeta)^r$, or otherwise with the complementary probability at least one bit is erased and the message is discarded. This corresponds to a model with $p_e = 1 - (1-\zeta)^r$.

We also model the latency $d$ of transmitting the $r$-bit message. As mentioned in Section~\ref{sec:setup} we assume the transmission of each bit requires a normalized unit time interval, hence $d = r$. With these expressions in place our communication scheme is parameterized by the single parameter $r$, the quantization level, taking values in $[1,T]$, as there are at most $T$ available time slots to transmit bits until the next sampling time. The other parameters are $\zeta$, the quality of the channel which is assumed given and not a design variable.

In this case, we can specialize the stability condition of Theorem~\ref{thm:stability} to: $\left[ 1 - (1-\zeta)^r + \left(\frac{1-\zeta}{2}\right)^r\right]A^T <1$. 
We can further plot in Figure~\ref{fig:no_coding} the optimal packet length $r$ that minimizes the steady state estimation error in \ref{eq:estimation_error_steady_state} for different values of the channel quality $\zeta$ and dynamics $A$. We observe qualitatively that longer packet lengths, and hence larger delays, are beneficial when the dynamics are fast or when the channel quality improves. Otherwise for sufficiently slow dynamics and noisy channel conditions it is better to employ smaller packet length, even a single bit, but more reliably. Estimation becomes unstable when the system dynamics become too fast or when the channel erasure becomes more likely.

\begin{figure}[t!]
	\centering
	\includegraphics[width=0.5\textwidth]{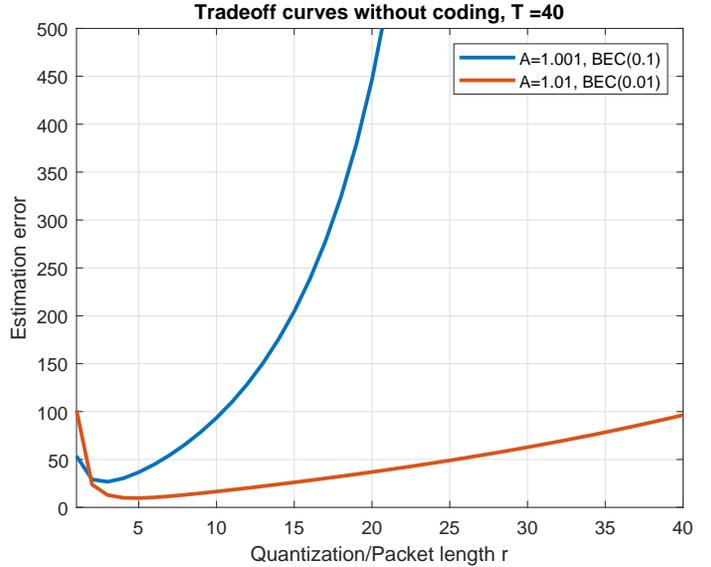}
	\caption{Estimation performance of uncoded communication over the binary erasure channel. We observe that there is an optimal quantization level that achieves the best latency-reliability tradeoff. Longer packet lengths, and hence larger delays, are beneficial when the dynamics are fast or when the channel quality improves. }
	\label{fig:no_coding}
\end{figure}

\textbf{Coded Transmission (using finite blocklength codes).} 
We consider error-correction coding systems which encode the $r$-bit information messages into longer $n$-bit codewords. Here we are assuming that the transmission takes place over a channel with binary input (e.g. the binary erasure channel). Clearly, the rate of the code needs to be lower than the channel capacity $\frac{r}{n} < C$. We attribute the latency $d$ of communicating the $n$-bit message in transmission delay of sending these $n$ bits - in practice there will also be other overheads contributing to delay that we omit from the present analysis. For example we model $d = n$, i.e., each bit requires a single (normalized) time unit. 
The expected reliability (error probability) of the code is denoted by $p_e$, i.e. with probability $1-p_e$ we can decode the information message correctly (from the noisy outcome of the channel) and with probability $p_e$ the decoding procedure is unsuccessful. As mentioned in Assumptions~\ref{as:model},~\ref{as:ack} we further assume that the decoder can \textit{detect} whether or not the decoding procedure has been successful, and will notify the transmitter about it by using a one bit ACK/NACK feedback -- see also Remark~\ref{rem:detection}.
As a result, in case of decoding failure, the transmitted packet will be discarded.   We can thus attribute the channel reliability $1 - p_e$ to the probability of successful decoding of the coded messages. Using Theorem~\ref{thm:error_bound}, the expected steady state error of the overall system is given as 
	\begin{align}
& \lim_{\ell \rightarrow \infty} \mathbb{E} |x_{\ell T +d} - \hat{x}_{\ell T +d}|  \nonumber \\
& \,\,\,\, \,\,\,\,= \frac{ A^{n} +  (p_e  + \frac{1-p_e}{2^r})(A^T - A^n) -1}{1 - (p_e  + \frac{1-p_e}{2^r}) A^T}\frac{W}{A-1}. \label{eq:steady_state_coding}
	\end{align}
From this expression it is apparent that the relation between code parameters $r,n,p_e$ plays a key role in characterizing the trade-off between the steady-state error and latency of the system. 
Fundamental information--theoretic laws \cite{polyanskiy2010channel,dobrushin1961mathematical, strassen} state that to reliably communicate over a channel with capacity $C$, the \emph{optimal} (shortest possible) blocklength
$n$ scales as 
\begin{align}
 & n  \approx \frac{V Q^{-1}(p_e)}{(C-R)^2},  \quad {\text{ or equivalently: }}  \nonumber\\
 &  p_e = Q\left(\sqrt{\frac{n}{V}}(C-R)  + O( \log n ) \right) ,  \label{opt_scaling}
\end{align}
where $Q(\cdot)$ is the tail probability of the standard normal distribution, $V$ is a characteristic of the channel referred to as channel dispersion, and $R = r/n$.
But the only optimal codes (in the sense of \ref{opt_scaling}) we currently know have an exponential decoding complexity in general. Crucially,  we  quest for codes with \emph{low-complexity}  encoder/decoder design (ideally, linear in blocklength). For the error-correcting codes used in practice, one can compute (simulate) the \textit{trade-off} curves showing how the codelength $n$ varies with the rate $R = r / n$ at different values of error probability $p_e$. By plugging such trade-off curves into \ref{eq:steady_state_coding} we can obtain trade-off curves for reliability versus latency.  

\begin{figure}[t!]
	\centering
	\includegraphics[width=0.5\textwidth]{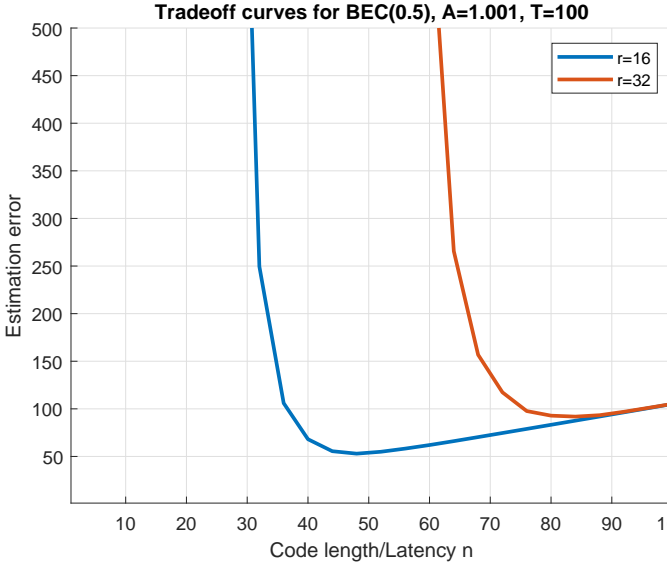}
	\caption{Estimation performance of different random coding schemes over the binary erasure channel with erasure probability $0.5$. We observe that there is an optimal codelength that achieves the best latency-reliability tradeoff. For $r=16$ quantization bits the optimal codelength is $n\approx45$ bits, while for $r=32$ the optimal codelength is $n\approx85$ bits.}
	\label{fig:random_coding}
\end{figure}

Let us now consider again the \textit{binary erasure channel (BEC)} as the transmission medium and use \textit{random linear codes} for error correction. Such codes are constructed by selecting uniformly at random a linear mapping from the space $\{0,1\}^r$ to $\{0,1\}^n$. Such a mapping is specified by a $k\times n$ binary matrix whose entries are chosen i.i.d according to Bernoulli($\frac12$). Every such random mapping (matrix) is a code that can be use to encode an $r$-bit information vector to a codeword of size $n$.  For the specific case of BEC, random codes can be decoded in cubic time in terms of the length and are known to have the optimal rate-reliability-length as in \ref{opt_scaling} (see e.g. \cite{amraoui2009finite}). In other words, such codes minimize the latency of the communication block of the system in Figure~\ref{fig:channel_model}. However, as their encoding/decoding complexity is rather high (cubic for BEC, exponential for other channels) we seldom use these codes in practice and other codes with linear complexity and higher latency (such as iterative) codes are preferred. 
For given system dynamics $A$ and sampling period $T$ we consider different code choices given by the quantization level $r$ (which indicates the number of raw information bits) and codelength $n$. We obtain the error rate of such coding by simulation, and we plot in Figure~\ref{fig:random_coding} the resulting estimation error by~\ref{eq:steady_state_coding}. We observe that there is an optimal codelength that minimizes estimation error. Shorter codes increase estimation error because they have lower reliability. The difference in performance can be significant, for example, in the figure, there is 50\% improvement in estimation quality between the optimal length and the longest length. Alternatively longer codes have better reliability but with resulting latency at the expense of estimation performance. As a side note, we point out that for the considered channel and dynamics values the preceding case with uncoded messages leads to instability.

A similar plot for stable system dynamics is shown in Figure~\ref{fig:stable}. Unbounded estimation error (unstable) cannot occur in this case because even if the probability of error is $p_e=1$ the estimation error is finite. But a performance tradeoff in codelength exists. We see that an appropriate codelength selection can decrease the estimation error by up to $30\%$ compared to longer codes.

\begin{figure}[t!]
	\centering
	\includegraphics[width=0.5\textwidth]{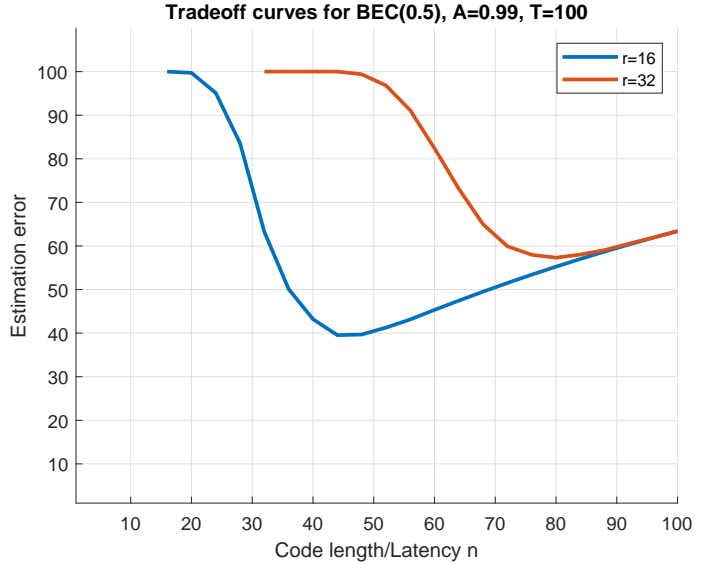}
	\caption{Estimation performance of different random coding schemes for a stable system over the binary erasure channel with erasure probability $0.5$. We observe that there is an optimal codelength that achieves the best latency-reliability tradeoff.}
	\label{fig:stable}
\end{figure}

Carrying on similar plots we also obtain the following \textit{practical insights}: for faster dynamics (larger $A^T$) and noisy channel conditions (larger probability of erasure $\zeta$) longer codelengths are preferred. This is expected as faster systems require higher reliability and noisy channels longer codes with more redundancy. On the other hand, for slower dynamics and less noisy channels shorter codelengths are optimal, i.e., there is no performance gain from longer codes. In other cases the codelengths should be neither too short nor too long.

\textbf{Theoretically Optimal Blocklength.} In principle, we can use equations \ref{eq:steady_state_coding} and \ref{opt_scaling} to find the optimal delay that minimizes the steady state error in terms of the number of information bits $r$. This can be done by deriving equation  \ref{eq:steady_state_coding}  in terms of $n$ and noting that $p_e$ is implicitly dependent with $n$ through \ref{opt_scaling}. However, the resulting equation does not attain a simple closed form solution and hence we need to resort to approximate solutions. Nevertheless,  the equations can be solved numerically by bisection and the optimal delay $n$ can be found in terms of $r$ and the parameters of the dynamical system and the channel. 

Let us now provide a heuristic argument to approximate the optimal delay $n$ in the regime where the value of $A$ is close to $1$, e.g. $A = 1.1$ or smaller. 
Let $\theta(n) = p_e(n)  + \frac{1-p_e(n)}{2^r}$. Then taking the derivative of \ref{eq:steady_state_coding} with respect to $n$ and equaling to zero, we obtain that the optimal codelength satisfies
\begin{equation}\label{eq:optimal_length1}
	\log(A)(1-\theta(n))(1-\theta(n)A^T) + \frac{d\theta}{dn} (A^T-1) =  0
\end{equation}
where $\log$ is the natural logarithm. Suppose $\theta(n)$ is small enough so that we may approximate $(1-\theta(n))(1-\theta(n)A^T)\approx 1-\theta(n)(A^T+1)$. Further suppose $A\approx 1+ a$ for some small $a>0$ so that $\log(A) \approx a$ and $A^T \approx  1+ aT$. Using these approximations in \ref{eq:optimal_length1} we get
\begin{equation}\label{eq:optimal_length2}
1-\theta(n)(2 + a T) + \frac{d\theta}{dn} T \approx 0
\end{equation}

Moreover let us approximate  $\theta(n) \approx p_e(n)$ which is reasonable for a large number of information bits $r$. From \ref{opt_scaling} let us approximate the normal tail as $Q(x) \approx \phi(x)/x$ where $\phi(x)$ is the standard normal density function. We also have that $\frac{dQ(x)}{dx} = -\phi(x)$.
Let us also assume the code rate $R=r/n$ in \ref{opt_scaling} is fixed to be independent of $n$ then we obtain from \ref{eq:optimal_length2} that the optimal code length satisfies
\begin{align}
1-\frac{\phi(\sqrt{\frac{n}{V}}(C-R))}{\sqrt{\frac{n}{V}}(C-R)}&(2 + a T) \notag\\
- &\phi(\sqrt{\frac{n}{V}}(C-R))\frac{C-R}{2 \sqrt{Vn}} T \approx 0
\end{align}
or equivalently
%
\begin{align}
\frac{\phi(\sqrt{\frac{n}{V}}(C-\frac{r}{n}))}{\sqrt{\frac{n}{V}}(C-\frac{r}{n})} 
\approx \left[ 2 + aT + 1/2 \frac{(C-\frac{r}{n})^2}{V} T\right]^{-1}
\end{align}
%
This equation does not have a closed form solution with respect to $n$. From this equation we see that both increasing the system eigenvalue $a$ and increasing the sampling period $T$ have the effect of increasing the optimal codelength $n$. However the effect of the sampling period is more significant at code rates further from the channel capacity ($R<<C$). Numerically we also see that this approximation compares well with the optimal codelength shown in Figure~\ref{fig:random_coding}, providing $n\approx37$ and $n\approx70$ for the cases $r=16$ and $r=32$ respectively.

\begin{rem}\label{rem:detection}
	
	The case where the channel may corrupt the transmitted message so that the receiver is not certain about what message was sent or may incorrectly decode a message is a challenging problem in control -- see for example recent work~\cite{sukhavasi2016linear} -- and is not further explored in this paper. 
	We note that when transmission is over the BEC channel, almost all the code designs in practice are capable of detecting decoding failures~\cite{richardson2008modern}. For other channels, the task of detecting the decoding failures is typically facilitated by using the use of cyclic redundancy check codes. Adding a cyclic redundancy check code on top of e.g., the channel codes discussed in Section~\ref{sec:coding}, can make the probability of incorrect decoding negligible. 
\end{rem}

\section{Latency-Reliability Tradeoffs for Higher-dimensional systems}\label{sec:multidimensional}

In this section we consider higher dimensional systems. Our goal is to fix a quantization scheme and develop a methodology to analyze the rate-latency-reliability tradeoff in state estimation. In particular this also includes the case where a system may have both unstable and stable modes.

Consider the system
\begin{equation}\label{eq:system_multidimensional}
x_{k+1} = A x_k + w_k
\end{equation}
where $x_k \in \mathbb{R}^n$, and we denote each element as $x_k(i), i=1, \ldots,n$, and $A\in \mathbb{R}^{n \times n}$ and $w_k$ is bounded noise with $w_k(i) \in [-W(i)/2, W(i)/2]$.

Tatikonda-Mitter~\cite{Tatikonda} propose a scheme that goes as follows. We can perform a similarity transform to obtain a system state evolution in the normal Jordan form. In the new coordinate system the receiver/estimator keeps a hyper-cube where the state belongs. The sensor/transmitter also keeps track of this hypercube and quantizes uniformly each element of this hypercube with a different number of bits per dimension. At the next sampling time, the sensor updates each coordinate of the hypercube to obtain a new hypercube where the new state sample is guaranteed to belong. The estimation error eventually accounts for the quantization performed along each dimension and along all Jordan blocks.

We explain this scheme in more detail in our setup. As the sensor in our setup is sampling the system every $T$ time steps this corresponds to system dynamics
\begin{equation}\label{eq:multidimensional}
x_T = A^T x_0 + \sum_{\ell=0}^{T-1} A^{T-1-\ell}  w_\ell
\end{equation}
Specifically let $A^T = \Phi^{-1} J \Phi$ be a Jordan decomposition and for simplicity of exposition assume that $A$ has only real eigenvalues\footnote{Complex eigenvalues can be handled as in \cite{Tatikonda}. This setup ends up in a slightly different coordinate state transformation and a slightly different recursion than \ref{eq:vector_recursion} and hence a different estimation error bound in \ref{eq:vector_steady_state}. The stability condition \ref{eq:vector_stability} remains the same.}. At sampling time $T$ the sensor observes the current state $x_T$ and applies the transform $z_T = \Phi x_T$. The sensor needs to quantize the transformed state. The receiver/estimator knows that the state belongs in some $n$-dimensional cube. The transmitter uniformly quantizes each dimension $i$ of this set into $2^{r_i}$ bins and transmits the $r_i$-bit message indicating the bin in which the measured state $x_T(i)$ belongs for all dimensions $i$. In our paper we take these values $r_i$ fixed and they satisfy $\sum_{i=1}^n r_i = r$, but it is possible to also optimize over how we allocate them.  After the latency of $d$ time steps, given the success or fail of the transmission, the receiver builds a state estimate $\hat{z}_T$ of the state $z_T$, applies the inverse transform $\hat{x}_T = \Phi^{-1} \hat{z}_T$ to obtain an estimate about the past state $x_T$, and propagates the dynamics to obtain an estimate $\hat{x}_{T+d} = A^d \hat{x}_T = A^d\Phi^{-1} \hat{z}_T$ to counteract the effect of latency as in Section~\ref{sec:setup}. Next we present the quantization method and the evolution of the estimation error with respect to the transformed state $z_T$, and at the end we discuss the performance with respect to the original system state $x_T$.

Consider the situation at time step $k=T$. The sensor measures $z_T$. At the same time, the remote estimator knows that the \textit{previous} sample $z_{0}$ lies in a hypercube, described by $z_0(i) \in [\ubar{Z}_0(i), \bar{Z}_0(i)]$ for each dimension $i=1, \ldots, n$. Similar to the proof of Theorem~\ref{thm:stability} let us define the random variable  $\Delta_0(i) = \bar{Z}_0(i) - \ubar{Z}_0(i)$ which denotes the width along each dimension. 

At the sampling time $k=T$ and before the transmission takes place the receiver only knows that the state $z_T$ has changed due to the model \ref{eq:multidimensional} to some value
\begin{equation}
z_T = J z_0 +  \sum_{m=0}^{T-1} \Phi A^{T-1-m} w_m
\end{equation}
where we employed the Jordan decomposition. Consider the $i$th element of this vector and suppose that it corresponds to some (real by assumption) eigenvalue $\lambda$. It takes the value
\begin{align}
z_T(i) = \lambda z_0(i) + &J_{i,i+1} z_0(i+1) \notag\\
&+ \sum_{m=0}^{T-1} \sum_{j=1}^n (\Phi A^{T-1-m})_{ij} w_m(j)
\end{align}
where we used the fact that $J$ is in the Jordan normal form and the element $J_{i,i+1} $ of this matrix is non-negative because it takes the value either 0 or 1. Given the previous information about $z_0(i) \in [\ubar{Z}_0(i), \bar{Z}_0(i)]$ then, the remote estimator, before transmission, knows that the state $z_T$ lies in a hypercube where the $i$th element of this vector is upper bounded by
\begin{align}
z_T(i) 
\leq 
&\max\{\lambda \bar{Z}_0(i), \lambda \ubar{Z}_0(i)\} + J_{i,i+1}  \bar{Z}_0(i+1) \\
&+ \sum_{m=0}^{T-1} \sum_{j=1}^n |(\Phi A^{m})_{ij} |  W(j)/2
\end{align}
where we employed the two extreme cases for $z_0(i)$ and we sum up the magnitude of the unknown values of the system noise $w_0, \ldots, w_{T-1}$. Similarly it is bounded below by
\begin{align}
z_T(i) \geq 
&\min\{\lambda \bar{Z}_0(i), \lambda \ubar{Z}_0(i)\} +  J_{i,i+1} \ubar{Z}_0(i+1) \notag\\
&- \sum_{m=0}^{T-1} \sum_{j=1}^n |(\Phi A^{m})_{ij} |  W(j)/2
\end{align}
Combining the upper and lower bounds we have that $z_T(i)$ lies in an interval of length
\begin{align}
|\lambda| (\bar{Z}_0(i)- \ubar{Z}_0(i)) &+  J_{i,i+1} (\bar{Z}_{0}(i+1) - \ubar{Z}_0(i+1)) \notag\\
&+ \sum_{m=0}^{T-1} \sum_{j=1}^n |(\Phi A^{m})_{ij} |  W(j)
\end{align}
Note that the absolute value of the eigenvalue has appeared. Using the above introduced notation this length is equal to
\begin{align}
|\lambda| \Delta_0(i) +  J_{i,i+1}\Delta_0(i+1) 
+ \sum_{m=0}^{T-1} \sum_{j=1}^n |(\Phi A^{m})_{ij} |  W(j)
\end{align}
We can further express these interval lengths along all dimensions $i=1, \ldots, n$ in a vector form 
\begin{equation}
	|J| \Delta_{0} 
	+ \sum_{m=0}^{T-1} |\Phi A^m |  W
\end{equation}
where by $|M|$ we denote a matrix whose elements are the absolute values of the elements of the matrix $M$.

The sensor quantizes uniformly each dimension $i$ of this new hypercube with a selected number of bits $r_i$, and it sends the bin in which the state belongs.

\underline{Case I:} With probability $p_e$ the message corresponding to the sample $z_T$ is not successfully received at time $k=T+d$, the remote estimator only knows the original information before transmission, i.e., that $z_T$  lies in the above hypercube with widths given by $|J| \Delta_{ 0} + \sum_{m=0}^{T-1} |\Phi A^m |  W $. The estimator selects the center of this hypercube as the estimate $\hat{z}_T$.

\underline{Case II:} Alternatively, if the message corresponding to the sample $z_T$ is successfully decoded at time $T+d$, the remote estimator learns in which of the $2^{r_1 + \ldots + r_n}$ bins the state $z_T$ belongs, it can construct an estimate as the center of that interval. In that case the estimator knows that the state lies in a hypercube with smaller widths (as compared to Case I) given by $ \text{diag}\{2^{-r_i}\}
( |J| \Delta_{0} 
+ \sum_{m=0}^{T-1} |\Phi A^m |  W )$

Let us define $\Delta_T$ as the width of the set where the remote estimator knows that the state $z_T$ belongs depending on the success or failure of the message at time $k =T+d$. This is a new random variable defined as
\begin{equation}	\label{eq:random_recursion}	
\Delta_T:= \left\{ \begin{array}{ll}
 |J| \Delta_{ 0} 
+ \sum_{m=0}^{T-1} |\Phi A^m |  W  &\text{w. p. } p_e \\
 \text{diag}\{2^{-r_i}\}
( |J| \Delta_{0} 
+ \sum_{m=0}^{T-1} |\Phi A^m |  W ) &\text{w. p. } 1-p_e
\end{array}\right.
\end{equation}
depending on the Case I or II above. By Assumption~\ref{as:ack} the receiver sends an acknowledgment signal to the transmitter to notify whether the message was successfully received or not. Then the process repeats.


At a general sampling time $\ell T$ we have that the width of the expected hypercube follows the general recursion 
\begin{align}\label{eq:vector_recursion}
\mathbb{E}\Delta_{(\ell+1) T} 
= \big(p_e I 	+ &(1-p_e) \text{diag}\{2^{-r_i}\}\big) \notag\\
&
\left( |J| \mathbb{E}\Delta_{\ell T} 
+ \sum_{\ell=0}^{T-1} |\Phi A^\ell |  W \right)
\end{align}
which follows by taking the expectation in \ref{eq:random_recursion}.

Since $|J|$ is an upper triangular matrix (due to the Jordan form), its eigenvalues are the diagonal elements, which correspond to the absolute values $|\lambda_i(A^T)|$ of the eigenvalues of matrix $A^T$ which are the same as $|\lambda_i(A)|^T$. It follows that the above recursion \ref{eq:vector_recursion} converges to a finite value if and only if the number of bits per dimension and the reliability satisfy  $(p_e + (1-p_e) \frac{1}{2^{r_i}}) |\lambda_i(A)|^T <1 $ for all $i$. This is a generalization of Theorem~\ref{thm:stability}. Similar results are shown in~\cite[Prop. 5.1]{Tatikonda}. Let $\Delta^*$ denote the vector that is the unique steady state solution of the above recursion~\ref{eq:vector_recursion}, i.e., 
\begin{align}\label{eq:vector_fixed_point}
\Delta^*
= \Bigg(p_e I 	+ &(1-p_e) \text{diag}\{2^{-r_i}\}\Bigg) 
\left( |J| \Delta^* 
+ \sum_{\ell=0}^{T-1} |\Phi A^\ell |  W \right)
\end{align}

Let us now return to the estimation error. We have by construction that for all sampling times and for all dimensions $i=1,\dots, n$ that
\begin{equation}\label{eq:zeta_delta}
| z_{\ell T}(i)  - \hat{z}_{\ell T}(i)  | \leq \frac{1}{2}  \Delta_{\ell T}(i)
\end{equation}
We can then measure the estimation error with respect to the original system dynamics as well as with respect to the current state\footnote{In this section we consider the $\ell_1$ norm where $\|x - \hat{x}\|_1 = \sum_{i=1}^n |x(i)-\hat{x}(i)|$.} by
\begin{align}
&\| x_{\ell T+d} - \hat{x}_{\ell T+d} \| \notag\\
& = \|\left. A^d \Phi^{-1} (z_{\ell T} - \hat{z}_{\ell T}) + \sum_{m=0}^{d-1} A^{d - 1 - m} w_{\ell T +m }\right.\|
\end{align}
where we used the fact that $\hat{x}_{\ell T+d} =A^d \Phi^{-1} \hat{z}_{\ell T}$ as explained in the beginning of this section. We can bound this estimation error as
\begin{align}
&\| x_{\ell T+d} - \hat{x}_{\ell T+d} \| \notag\\
&\leq \| A^d \Phi^{-1} \| \| z_{\ell T+d} - \hat{z}_{\ell T+d}\| + \sum_{m=0}^{d-1} \|A^{d - 1 - m}\|  \|w_{\ell T +m }\| 
\notag\\
&\leq \frac{1}{2} \| A^d \Phi^{-1} \| \sum_{i=1}^n \Delta_{\ell T + d}(i) + \sum_{m=0}^{d-1} \|A^{m}\| \sum_{i=1}^n \frac{1}{2} W(i)
\end{align}
where in the last inequality we used \ref{eq:zeta_delta}. Taking the expectation and the limit in the above expression we obtain the following result


\begin{thm}[Vector Estimation Performance]\label{thm:vector_error_bound}
	Consider the remote estimation of the dynamical system in \ref{eq:system_multidimensional} over the communication channel and let Assumptions~\ref{as:model} and~\ref{as:ack} hold. Suppose we employ the sequential quantization scheme described in this section. Let $A^T = \Phi^{-1} J \Phi$ be a Jordan decomposition and suppose the system has only real eigenvalues. Moreover suppose  
	\begin{equation}\label{eq:vector_stability}
		(p_e + (1-p_e) 2^{-r_i}) |\lambda_i(A)|^T <1
	\end{equation}
	 holds for all $i=1, \ldots,n$. Then the expected steady state estimation error is bounded by
	\begin{align}\label{eq:vector_steady_state}
	\limsup_{\ell \rightarrow \infty}\,  &\mathbb{E}\| x_{\ell T+d} - \hat{x}_{\ell T+d} \| \notag\\
	&\leq \frac{1}{2} \| A^d \Phi^{-1} \| \sum_{i=1}^n \Delta^*(i) + \sum_{m=0}^{d-1} \|A^{m}\| \sum_{i=1}^n \frac{1}{2} W(i)
	\end{align}
	where $\Delta^*$ is the solution to the linear equation \ref{eq:vector_fixed_point}.
\end{thm}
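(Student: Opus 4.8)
The plan is to treat the theorem as the endpoint of the construction already assembled above: the genuinely new content is only (i) confirming that the affine recursion \ref{eq:vector_recursion} converges exactly under condition \ref{eq:vector_stability}, and (ii) passing the limit through the expectation in the per-step error bound derived immediately before the statement. Everything else (the width recursion, the fixed point, and the per-step bound) is in hand.

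First I would rewrite \ref{eq:vector_recursion} as an affine iteration $\mathbb{E}\Delta_{(\ell+1)T} = M\,\mathbb{E}\Delta_{\ell T} + c$, with iteration matrix $M = \big(p_e I + (1-p_e)\text{diag}\{2^{-r_i}\}\big)|J|$ and constant offset $c = \big(p_e I + (1-p_e)\text{diag}\{2^{-r_i}\}\big)\sum_{m=0}^{T-1}|\Phi A^m|\,W$. The key structural observation is that $|J|$ is upper triangular: the Jordan form carries the eigenvalues on the diagonal and the nonnegative entries $0$ or $1$ on the superdiagonal, which survive the entrywise absolute value. Since left-multiplication by the diagonal matrix $p_e I + (1-p_e)\text{diag}\{2^{-r_i}\}$ preserves upper-triangularity, $M$ is upper triangular, so its eigenvalues are its diagonal entries $(p_e + (1-p_e)2^{-r_i})|\lambda_i(A^T)|$. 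Using $|\lambda_i(A^T)| = |\lambda_i(A)|^T$ for the (real) eigenvalues, the spectral radius of $M$ equals $\max_i (p_e + (1-p_e)2^{-r_i})|\lambda_i(A)|^T$. A standard fact about affine iterations then gives convergence from every initial condition if and only if this spectral radius is strictly less than $1$, which is precisely condition \ref{eq:vector_stability} holding for all $i=1,\dots,n$.

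When \ref{eq:vector_stability} holds, $I-M$ is invertible and $\mathbb{E}\Delta_{\ell T}$ converges to the unique fixed point $\Delta^* = (I-M)^{-1}c$; rearranging $(I-M)\Delta^* = c$ reproduces \ref{eq:vector_fixed_point}. It then remains only to take expectations and $\limsup_{\ell\to\infty}$ in the per-step bound established just before the theorem,
\begin{equation*}
\mathbb{E}\| x_{\ell T+d} - \hat{x}_{\ell T+d} \| \leq \frac{1}{2}\|A^d\Phi^{-1}\|\sum_{i=1}^n \mathbb{E}\Delta_{\ell T}(i) + \sum_{m=0}^{d-1}\|A^m\|\sum_{i=1}^n \frac{W(i)}{2},
\end{equation*}
and substitute the componentwise limit $\mathbb{E}\Delta_{\ell T}(i)\to\Delta^*(i)$ into the first sum, yielding \ref{eq:vector_steady_state}. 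The second term is constant in $\ell$ and passes through unchanged.

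The main obstacle I anticipate is the spectral-radius step: one must justify that convergence of a genuinely coupled multidimensional affine recursion is governed solely by the diagonal of $M$, rather than by cross terms arising from the superdiagonal Jordan couplings. This reduces cleanly to the triangular structure of $M$, so once one commits to reading the eigenvalues off the diagonal the argument is short; the delicate points are that the entrywise absolute value $|J|$ retains upper-triangularity (safe because the off-diagonal Jordan entries are already nonnegative) and that $|\lambda_i(A^T)|=|\lambda_i(A)|^T$. Both were essentially secured in the width derivation preceding the statement, so beyond this spectral bookkeeping the proof is a routine limit interchange.
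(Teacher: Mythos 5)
Your proposal is correct and follows essentially the same route as the paper: the paper's argument is exactly the derivation preceding the theorem (width recursion \ref{eq:vector_recursion}, upper-triangularity of $|J|$ giving eigenvalues $(p_e+(1-p_e)2^{-r_i})|\lambda_i(A)|^T$ on the diagonal, convergence to the fixed point $\Delta^*$ of \ref{eq:vector_fixed_point}, then taking expectation and the limit in the per-step bound). Your added care on the spectral-radius step (that premultiplying $|J|$ by the diagonal matrix preserves triangularity, and that convergence of the affine iteration is governed by $\rho(M)<1$) only makes explicit what the paper leaves implicit.
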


To summarize, we can solve for the recursion steady state solution \ref{eq:vector_fixed_point}, plug in the above expression \ref{eq:vector_steady_state}, and obtain an expression on the estimation cost. This is a generalization of Theorem~\ref{thm:error_bound} which is in fact recovered in the scalar dynamics case. Again we see that rate $r$, reliability $p_e$ and latency $d$ have a significant effect on performance.

In the following we analyze numerically the above rate-latency-reliability tradeoff.

\subsection{Finite blocklength selection for higher-dimensional systems}

Consider the system with double integrator dynamics $A=\left[\begin{array}{cc}1 &0.1\\ 0 &1\end{array}\right]$ and driven by noise in the second state with magnitude $W=\left[\begin{array}{c}0 \\1\end{array}\right]$. This corresponds for example to tracking the position of an object subject to random acceleration. Suppose we fix different quantization levels $r$ and for simplicity we allocate an equal number of bits to quantize each dimension of this system. We consider again the Binary erasure channel with parameter $\zeta$ and optimal coding for this channel as explained in Section~\ref{sec:coding}. In Figure~\ref{fig:multidimensional} we plot the estimation performance for the first state as a function of the code length. This is computed using the expression as in \ref{eq:vector_steady_state}. Note that in this example even though the first state is not subject to noise there is still the effect of noise carried over from the second state. We observe quantitatively similar tradeoffs between shorter or longer codes as in the scalar system case of Section~\ref{sec:coding}. Longer codes have better reliability but they increase latency which in turn affects estimation, hence they should be avoided.

\begin{figure}[t!]
	\centering
	\includegraphics[width=0.5\textwidth]{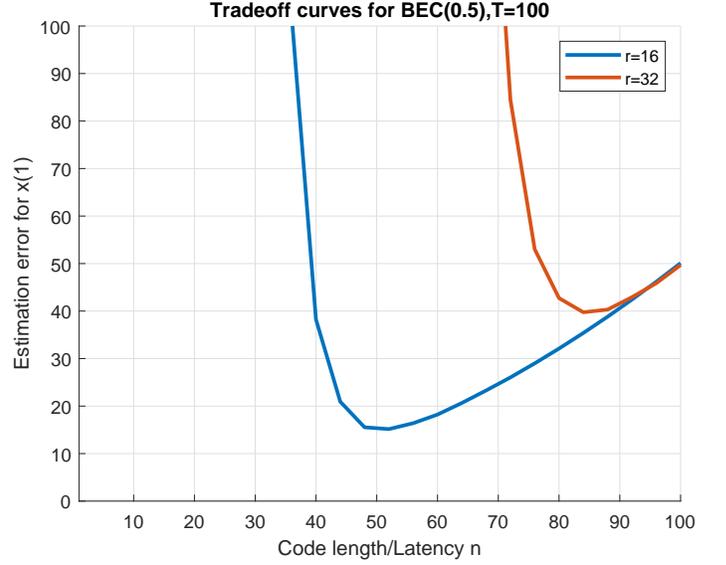}
	\caption{Estimation performance of different random coding schemes for a higher dimensional system over the binary erasure channel with erasure probability $0.5$. We observe that there is an optimal codelength that achieves the best latency-reliability tradeoff.}
	\label{fig:multidimensional}
\end{figure}

\section{Conclusion}
We consider a new communication abstraction motivated by the recent interest for low-latency high-reliability applications in the Internet-of-Things. More specifically we examine the tradeoffs between latency and reliability for the problem of estimating scalar dynamical systems over communication channels. We couple this approach with different latency-reliability curves derived from practical coding schemes. Our methodology enables a novel co-design, i.e., select the appropriate codelength depending on the system dynamics to optimize estimation performance. 

Future work is focused on extending the co-design methodology from estimation to low-latency high-reliability control of fast dynamical systems.

\bibliographystyle{bib/IEEEtran}
\bibliography{bib/Intro,bib/power,bib/random_access,bib/state_aware,bib/coding,bib/Coding_control_new,bib/myIEEEabrv,bib/myIEEEfull,bib/bib-pooling,bib/ribeiro,bib/robotics,bib/low_latency_new}

\begin{thebibliography}{10}
\providecommand{\url}[1]{#1}
\csname url@samestyle\endcsname
\providecommand{\newblock}{\relax}
\providecommand{\bibinfo}[2]{#2}
\providecommand{\BIBentrySTDinterwordspacing}{\spaceskip=0pt\relax}
\providecommand{\BIBentryALTinterwordstretchfactor}{4}
\providecommand{\BIBentryALTinterwordspacing}{\spaceskip=\fontdimen2\font plus
\BIBentryALTinterwordstretchfactor\fontdimen3\font minus
  \fontdimen4\font\relax}
\providecommand{\BIBforeignlanguage}[2]{{%
\expandafter\ifx\csname l@#1\endcsname\relax
\typeout{** WARNING: IEEEtran.bst: No hyphenation pattern has been}%
\typeout{** loaded for the language `#1'. Using the pattern for}%
\typeout{** the default language instead.}%
\else
\language=\csname l@#1\endcsname
\fi
#2}}
\providecommand{\BIBdecl}{\relax}
\BIBdecl

\bibitem{agiwal2016next}
M.~Agiwal, A.~Roy, and N.~Saxena, ``Next generation 5g wireless networks: A
  comprehensive survey,'' \emph{IEEE Communications Surveys \& Tutorials},
  vol.~18, no.~3, pp. 1617--1655, 2016.

\bibitem{Tatikonda}
S.~Tatikonda and S.~Mitter, ``Control under communication constraints,''
  \emph{IEEE Transactions on Automatic Control}, vol.~49, no.~7, pp.
  1056--1068, 2004.

\bibitem{Nair_overview}
G.~Nair, F.~Fagnani, S.~Zampieri, and R.~Evans, ``Feedback control under data
  rate constraints: An overview,'' \emph{Proceedings of the IEEE}, vol.~95,
  no.~1, pp. 108--137, 2007.

\bibitem{franceschetti2014elements}
M.~Franceschetti and P.~Minero, ``Elements of information theory for networked
  control systems,'' in \emph{Information and Control in Networks}.\hskip 1em
  plus 0.5em minus 0.4em\relax Springer, 2014, pp. 3--37.

\bibitem{Liberzon_Mitra_2018}
D.~Liberzon and S.~Mitra, ``Entropy and minimal bit rates for state estimation
  and model detection,'' \emph{IEEE Transactions on Automatic Control},
  vol.~63, no.~10, pp. 3330--3344, Oct 2018.

\bibitem{sahai2006necessity}
A.~Sahai and S.~Mitter, ``The necessity and sufficiency of anytime capacity for
  stabilization of a linear system over a noisy communication link—part i:
  Scalar systems,'' \emph{IEEE transactions on Information Theory}, vol.~52,
  no.~8, pp. 3369--3395, 2006.

\bibitem{tatikonda2004control}
S.~Tatikonda and S.~Mitter, ``Control over noisy channels,'' \emph{IEEE
  Transactions on Automatic Control}, vol.~49, no.~7, pp. 1196--1201, 2004.

\bibitem{brockett2000quantized}
R.~W. Brockett and D.~Liberzon, ``Quantized feedback stabilization of linear
  systems,'' \emph{IEEE Transactions on Automatic Control}, vol.~45, no.~7, pp.
  1279--1289, 2000.

\bibitem{yuksel2006minimum}
S.~Yuksel and T.~Ba{\c{s}}ar, ``Minimum rate coding for lti systems over
  noiseless channels,'' \emph{IEEE Transactions on Automatic Control}, vol.~51,
  no.~12, pp. 1878--1887, 2006.

\bibitem{ostrovsky2009error}
R.~Ostrovsky, Y.~Rabani, and L.~J. Schulman, ``Error-correcting codes for
  automatic control,'' \emph{IEEE Transactions on Information Theory}, vol.~55,
  no.~7, pp. 2931--2941, 2009.

\bibitem{sukhavasi2016linear}
R.~T. Sukhavasi and B.~Hassibi, ``Linear time-invariant anytime codes for
  control over noisy channels,'' \emph{IEEE Transactions on Automatic Control},
  2016.

\bibitem{kostina2016rate}
V.~Kostina and B.~Hassibi, ``Rate-cost tradeoffs in control,'' in
  \emph{Communication, Control, and Computing (Allerton), 2016 54th Annual
  Allerton Conference on}.\hskip 1em plus 0.5em minus 0.4em\relax IEEE, 2016,
  pp. 1157--1164.

\bibitem{khina2017algorithms}
A.~Khina, Y.~Nakahira, Y.~Su, and B.~Hassibi, ``Algorithms for optimal control
  with fixed-rate feedback,'' in \emph{Proc. IEEE Conf. Decision and Control
  (CDC)}, 2017.

\bibitem{varaiya1983causal}
P.~Varaiya and J.~Walrand, ``Causal coding and control for markov chains,''
  \emph{Systems \& control letters}, vol.~3, no.~4, pp. 189--192, 1983.

\bibitem{borkar1997lqg}
V.~S. Borkar and S.~K. Mitter, \emph{LQG control with communication
  constraints}.\hskip 1em plus 0.5em minus 0.4em\relax Springer, 1997.

\bibitem{khina2016multi}
A.~Khina, G.~M. Pettersson, V.~Kostina, and B.~Hassibi, ``Multi-rate control
  over awgn channels via analog joint source-channel coding,'' in
  \emph{Decision and Control (CDC), 2016 IEEE 55th Conference on}.\hskip 1em
  plus 0.5em minus 0.4em\relax IEEE, 2016, pp. 5968--5973.

\bibitem{kostina2018exact}
V.~Kostina, Y.~Peres, G.~Ranade, and M.~Sellke, ``Exact minimum number of bits
  to stabilize a linear system,'' \emph{arXiv preprint arXiv:1807.07686}, 2018.

\bibitem{Sinopoli_intermittent}
B.~Sinopoli, L.~Schenato, M.~Franceschetti, K.~Poolla, M.~Jordan, and
  S.~Sastry, ``Kalman filtering with intermittent observations,'' \emph{IEEE
  Transactions on Automatic Control}, vol.~49, no.~9, pp. 1453--1464, 2004.

\bibitem{Basar_optimal_control_over_links}
O.~C. Imer, S.~Y{\"u}ksel, and T.~Ba{\c{s}}ar, ``Optimal control of {LTI}
  systems over unreliable communication links,'' \emph{Automatica}, vol.~42,
  no.~9, pp. 1429--1439, 2006.

\bibitem{Hespanha_survey}
J.~Hespanha, P.~Naghshtabrizi, and Y.~Xu, ``A survey of recent results in
  networked control systems,'' \emph{Proceedings of the IEEE}, vol.~95, no.~1,
  pp. 138--162, 2007.

\bibitem{Gupta_LQG}
V.~Gupta, B.~Hassibi, and R.~Murray, ``Optimal {LQG} control across
  packet-dropping links,'' \emph{Systems \& Control Letters}, vol.~56, no.~6,
  pp. 439--446, 2007.

\bibitem{Walsh_stability}
G.~C. Walsh, H.~Ye, and L.~G. Bushnell, ``Stability analysis of networked
  control systems,'' \emph{IEEE Transactions on Control Systems Technology},
  vol.~10, no.~3, pp. 438--446, 2002.

\bibitem{smith2003estimation}
S.~C. Smith and P.~Seiler, ``Estimation with lossy measurements: jump
  estimators for jump systems,'' \emph{IEEE Transactions on Automatic Control},
  vol.~48, no.~12, pp. 2163--2171, 2003.

\bibitem{Schenato_foundations}
L.~Schenato, B.~Sinopoli, M.~Franceschetti, K.~Poolla, and S.~Sastry,
  ``Foundations of control and estimation over lossy networks,''
  \emph{Proceedings of the IEEE}, vol.~95, no.~1, pp. 163--187, 2007.

\bibitem{Event_triggered_intro}
W.~Heemels, K.~H. Johansson, and P.~Tabuada, ``An introduction to
  event-triggered and self-triggered control,'' in \emph{51st Annual Conference
  on Decision and Control}, 2012, pp. 3270--3285.

\bibitem{Hespanha_main}
Y.~Xu and J.~Hespanha, ``Optimal communication logics in networked control
  systems,'' in \emph{Proc. of the 43rd IEEE Conference on Decision and
  Control}, vol.~4, 2004, pp. 3527--3532.

\bibitem{lipsa2011remote}
G.~M. Lipsa and N.~C. Martins, ``Remote state estimation with communication
  costs for first-order lti systems,'' \emph{IEEE Transactions on Automatic
  Control}, vol.~56, no.~9, pp. 2013--2025, 2011.

\bibitem{imer2010optimal}
O.~C. Imer and T.~Ba{\c{s}}ar, ``Optimal estimation with limited
  measurements,'' \emph{International Journal of Systems, Control and
  Communications}, vol.~2, no. 1-3, pp. 5--29, 2010.

\bibitem{Hirche_joint}
A.~Molin and S.~Hirche, ``On {LQG} joint optimal scheduling and control under
  communication constraints,'' in \emph{Proc. of the 48th IEEE Conference on
  Decision and Control}, 2009, pp. 5832--5838.

\bibitem{nayyar2013optimal}
A.~Nayyar, T.~Ba{\c{s}}ar, D.~Teneketzis, and V.~V. Veeravalli, ``Optimal
  strategies for communication and remote estimation with an energy harvesting
  sensor,'' \emph{IEEE Transactions on Automatic Control}, vol.~58, no.~9, pp.
  2246--2260, 2013.

\bibitem{GatsisEtal14}
K.~Gatsis, A.~Ribeiro, and G.~J. Pappas, ``Optimal power management in wireless
  control systems,'' \emph{IEEE Transactions on Automatic Control}, vol.~59,
  no.~6, pp. 1495--1510, 2014.

\bibitem{Quevedo_power_coding}
D.~E. Quevedo, A.~Ahl{\'e}n, and J.~{\O}stergaard, ``Energy efficient state
  estimation with wireless sensors through the use of predictive power control
  and coding,'' \emph{IEEE Transactions on Signal Processing}, vol.~58, no.~9,
  pp. 4811--4823, 2010.

\bibitem{Branicky_stability}
W.~Zhang, M.~Branicky, and S.~Phillips, ``Stability of networked control
  systems,'' \emph{IEEE Control Systems Magazine}, vol.~21, no.~1, pp. 84--99,
  2001.

\bibitem{heemels2010networked}
W.~M.~H. Heemels, A.~R. Teel, N.~van~de Wouw, and D.~Nesic, ``Networked control
  systems with communication constraints: Tradeoffs between transmission
  intervals, delays and performance,'' \emph{IEEE Transactions on Automatic
  control}, vol.~55, no.~8, pp. 1781--1796, 2010.

\bibitem{Donkers_switched}
M.~Donkers, W.~Heemels, N.~Van De~Wouw, and L.~Hetel, ``Stability analysis of
  networked control systems using a switched linear systems approach,''
  \emph{IEEE Transactions on Automatic Control}, vol.~56, no.~9, pp.
  2101--2115, 2011.

\bibitem{LeNy_resource_LQR}
J.~Le~Ny, E.~Feron, and G.~J. Pappas, ``Resource constrained lqr control under
  fast sampling,'' in \emph{Proc. of the 14th International Conference on
  Hybrid Systems: Computation and Control}.\hskip 1em plus 0.5em minus
  0.4em\relax ACM, 2011, pp. 271--280.

\bibitem{Hirche_Scheduling_Price}
A.~Molin and S.~Hirche, ``Price-based adaptive scheduling in multi-loop control
  systems with resource constraints,'' \emph{IEEE Transactions on Automatic
  Control}, vol.~59, no.~12, pp. 3282--3295, 2014.

\bibitem{Johansson_dual_TAC}
C.~Ramesh, H.~Sandberg, and K.~H. Johansson, ``Design of state-based schedulers
  for a network of control loops,'' \emph{IEEE Transactions on Automatic
  Control}, vol.~58, no.~8, pp. 1962--1975, 2013.

\bibitem{Martins_estimation_shared}
M.~Vasconcelos and N.~Martins, ``Remote estimation games over shared
  networks,'' in \emph{52nd Annual Allerton Conf. on Communication, Control,
  and Computing}, 2014, pp. 12--18.

\bibitem{GatsisEtal15}
K.~Gatsis, M.~Pajic, A.~Ribeiro, and G.~J. Pappas, ``Opportunistic control over
  shared wireless channels,'' \emph{IEEE Transactions on Automatic Control},
  vol.~60, no.~12, pp. 3140--3155, 2015.

\bibitem{GatsisEtal18}
K.~Gatsis, A.~Ribeiro, and G.~J. Pappas, ``Random access design for wireless
  control systems,'' \emph{Automatica}, vol.~91, pp. 1 -- 9, May 2018.

\bibitem{smarra2018efficient}
F.~Smarra, M.~D. Di~Benedetto, and A.~D'Innocenzo, ``Efficient routing
  redundancy design over lossy networks,'' \emph{International Journal of
  Robust and Nonlinear Control}, vol.~28, no.~6, pp. 2574--2597, 2018.

\bibitem{maity2018optimal}
D.~Maity, M.~H. Mamduhi, S.~Hirche, K.~H. Johansson, and J.~S. Baras, ``Optimal
  lqg control under delay-dependent costly information,'' \emph{IEEE Control
  Systems Letters}, 2018.

\bibitem{sun2017update}
Y.~Sun, E.~Uysal-Biyikoglu, R.~D. Yates, C.~E. Koksal, and N.~B. Shroff,
  ``Update or wait: How to keep your data fresh,'' \emph{IEEE Transactions on
  Information Theory}, vol.~63, no.~11, pp. 7492--7508, 2017.

\bibitem{he2018optimal}
Q.~He, D.~Yuan, and A.~Ephremides, ``Optimal link scheduling for age
  minimization in wireless systems,'' \emph{IEEE Transactions on Information
  Theory}, vol.~64, no.~7, pp. 5381--5394, 2018.

\bibitem{kadota2018optimizing}
I.~Kadota, A.~Sinha, and E.~Modiano, ``Optimizing age of information in
  wireless networks with throughput constraints,'' in \emph{IEEE Infocom},
  2018.

\bibitem{swamy2017real}
V.~N. Swamy, S.~Suri, P.~Rigge, M.~Weiner, G.~Ranade, A.~Sahai, and
  B.~Nikoli{\'c}, ``Real-time cooperative communication for automation over
  wireless,'' \emph{IEEE Transactions on Wireless Communications}, vol.~16,
  no.~11, pp. 7168--7183, 2017.

\bibitem{swamy2016network}
V.~N. Swamy, P.~Rigge, G.~Ranade, A.~Sahai, and B.~Nikoli{\'c}, ``Network
  coding for high-reliability low-latency wireless control,'' in \emph{Wireless
  Communications and Networking Conference (WCNC), 2016 IEEE}.\hskip 1em plus
  0.5em minus 0.4em\relax IEEE, 2016, pp. 1--7.

\bibitem{dickstein2016finite}
L.~Dickstein, V.~N. Swamy, G.~Ranade, and A.~Sahai, ``Finite block length
  coding for low-latency high-reliability wireless communication,'' in
  \emph{Communication, Control, and Computing (Allerton), 2016 54th Annual
  Allerton Conference on}.\hskip 1em plus 0.5em minus 0.4em\relax IEEE, 2016,
  pp. 908--915.

\bibitem{hu2018finite}
Y.~Hu, M.~Serror, K.~Wehrle, and J.~Gross, ``Finite blocklength performance of
  cooperative multi-terminal wireless industrial networks,'' \emph{IEEE
  Transactions on Vehicular Technology}, vol.~67, no.~7, 2018.

\bibitem{polyanskiy2010channel}
Y.~Polyanskiy, H.~V. Poor, and S.~Verd{\'u}, ``Channel coding rate in the
  finite blocklength regime,'' \emph{IEEE Transactions on Information Theory},
  vol.~56, no.~5, pp. 2307--2359, 2010.

\bibitem{dobrushin1961mathematical}
R.~Dobrushin, ``Mathematical problems in the shannon theory of optimal coding
  of information,'' in \emph{Proc. 4th Berkeley Symp. Mathematics, Statistics,
  and Probability}, vol.~1, 1961, pp. 211--252.

\bibitem{strassen}
V.~Strassen, ``Asymptotische absch\"atzungen in {Shannons}
  informationstheorie,'' \emph{Transactions of Third Prague Conference on
  Information Theory}, pp. 689--723, 1967.

\bibitem{hassani2014finite}
H.~Hassani, K.~Alishahi, and R.~Urbanke, ``Finite-length scaling for polar
  codes,'' \emph{IEEE Transactions on Information Theory}, vol.~60, no.~10, pp.
  5875--5898, 2014.

\bibitem{mondelli2016unified}
M.~Mondelli, S.~H. Hassani, and R.~L. Urbanke, ``Unified scaling of polar
  codes: Error exponent, scaling exponent, moderate deviations, and error
  floors,'' \emph{IEEE Transactions on Information Theory}, vol.~62, no.~12,
  pp. 6698--6712, 2016.

\bibitem{mondelli2014achieve}
M.~Mondelli, R.~L. Urbanke, and S.~H. Hassani, ``How to achieve the capacity of
  asymmetric channels.'' in \emph{Allerton}, 2014, pp. 789--796.

\bibitem{amraoui2009finite}
A.~Amraoui, A.~Montanari, T.~Richardson, and R.~Urbanke, ``Finite-length
  scaling for iteratively decoded {LDPC} ensembles,'' \emph{IEEE Transactions
  on Information Theory}, vol.~55, no.~2, pp. 473--498, 2009.

\bibitem{richardson2008modern}
T.~Richardson and R.~Urbanke, \emph{Modern coding theory}.\hskip 1em plus 0.5em
  minus 0.4em\relax Cambridge university press, 2008.

\end{thebibliography}

\end{document}